\newcommand{\mathbbm}{\mathbb} % ============= mtpro2 uses mthbi or mthrb (called by mathbb)
\newcommand{\Yc}{\mathcal{Y}}
\newcommand{\Fc}{\mathcal{F}}
\newcommand{\Gc}{\mathcal{G}}
\newcommand{\Lc}{\mathcal{L}}
\newcommand{\Nc}{\mathcal{N}}
\newcommand{\Sc}{\mathcal{S}}
\newcommand{\Dc}{\mathcal{D}}
\newcommand{\Xf}{\mathfrak{X}}
\newcommand{\Yf}{\mathfrak{Y}}
\newcommand{\Lf}{\mathfrak{L}}
\newcommand{\Df}{\mathfrak{D}}
\newcommand{\Rb}{{\mathbbm{R}}}
\newcommand{\Fb}{{\mathbbm{F}}}
\newcommand{\Eb}{{\mathbbm{E}}}
\newcommand{\Sb}{{\mathbbm{S}}}
\newcommand{\Nb}{{\mathbbm{N}}}
\newcommand{\Xb}{{\mathbbm{X}}}
\newcommand{\xb}{{\mathbbm{x}}}
\newcommand{\hb}{{\mathbbm{h}}}
\newcommand{\Cdot}{\hspace{0.1em}\cdot\hspace{0.1em}}
\newcommand{\eqdef}{\triangleq}
\newcommand{\nobarfrac}{\genfrac{}{}{0pt}{1}}
\DeclareMathOperator*{\essup}{ ess\,sup}
\newcommand{\1}{\mathds{1}}
\newcommand{\D}{\textup{d}}
\DeclareMathOperator{\avar}{AVaR}
\newcommand{\tref}[1]{\textup{\ref{#1}}}
\newenvironment{tightitemize}{%
    \list{{\textup{$\bullet$}}}{\settowidth\labelwidth{{\textup{\qquad}}}
    \leftmargin\labelwidth \advance\leftmargin\labelsep
    \parsep 0pt plus 1pt minus 1pt \topsep 3pt \itemsep 3pt
    }}{\endlist}
\newtheorem{definition}{Definition}[section]
\newtheorem{theorem}{Theorem}[section]
\newtheorem{lemma}{Lemma}[section]
\newtheorem{assumption}{Assumption}[section]
\title{Subdifferentials of Convex Operators Valued in the Space of Integrable Functions with Application to Risk-Averse Optimization}%\thanks{...}\thanks{...}% At most 5 thanks
\author{Darinka Dentcheva\footnote{Stevens Institute of Technology, Department of Mathematical Sciences, Hoboken, NJ 07030, USA; email:
\texttt{darinka.dentcheva@stevens.edu}}\hspace{0.5em}  and
Andrzej Ruszczy\'nski\footnote{Department of Management Science and Information Systems,
Rutgers University,
Piscataway, NJ 08854, email:\texttt{rusz@rutgers.edu}}}
\begin{document}

\maketitle

\begin{abstract} We study differentiability properties of convex operators defined on a Banach space with values in an $\Lc_p$ space and of their compositions with monotonic convex functionals on this space.
We develop new tools for operators enjoying an additional feature known as the local property. The new approach and results go beyond the classical theory of normal integrands and lattice-valued operators. We further describe the subdifferentials of compositions of such operators with convex monotonic functionals.  The new results are applied to obtain novel optimality conditions in the subdifferential form for a broad class of risk-averse stochastic optimization problems with risk functionals as objectives, with partial information,
and with stochastic dominance constraints.
While our analysis is motivated by the theory and methods of risk-averse optimization, it addresses problems of a more general structure and has a potential for further applications.
\end{abstract}
%
%\begin{resume} ... \end{resume}
%
%\subjclass{49K27, 49K45, 46N10, 90C15}
%
%\keywords{Lattice-valued operators; Risk measures; Stochastic dominance constraints}
%

%%-----------------------------
%%      your text
%%-----------------------------
\section*{Introduction}

Our study is motivated by stochastic optimization models with non-linear operators expressing risk aversion. The area of optimization integrating risk models has developed very rapidly in the recent years due to its mathematical challenges and practical relevance. Among the theoretical questions posed by risk models, a key question pertains to the subdifferentiability of operators valued in $\Lc_p$ spaces and their compositions with non-linear functionals. A lot of work containing fundamental results on subdifferentiation of integral functionals is available
in the area of convex analysis. We refer to the pioneering results in \cite{rockafellar1968integrals,Val69,rockafellar1971integrals,valadier1972sous,stra:65} and to the thorough exposition in the monographs \cite{cast:77,lev:85}. In this paper, we undertake the challenge to address subdifferentiability of convex operators $F:\Xf\to \Lc_p(\Omega,\Fc,P)$ defined on a Banach space $\Xf$ and composed further with nonlinear functionals. These structures arise in the modern directions of stochastic optimization, where further technical difficulties are posed by the non-linearity in probability.

Our goal is to establish subdifferentiability properties of such operators, describe their subdifferentials, and obtain optimality conditions in subdifferential form for stoch\-astic optimization problems incorporating risk models as objectives and constraints.

More precisely, we consider the following general optimization problem:
\begin{equation},
\label{general-risk-opt}
\begin{aligned}
\min \; & \varrho\big(F(x)\big),\\
        & \rho_t\big(G(x)\big)\leq 0, \quad \forall\ t\in J,\\
        & x\in\Yc.
\end{aligned}
\end{equation}
Here, $\Yc$ is a subset of a Banach space $\Xf$, and the operators $F:\Xf\to \Lc_p(\Omega,\Fc,P)$ and $G:\Xf\to \Lc_p(\Omega,\Fc,P)$ assign to a decision $x\in \Xf$ random variables $Z_F=F(x)$ and $Z_G=G(x)$, respectively. The functionals $\varrho:\Lc_p(\Omega,\Fc,P)\to \Rb$ and $\rho_t:\Lc_p(\Omega,\Fc,P)\to \Rb$ for all $t$ in an interval $J\subset\Rb$
are convex and nondecreasing with respect to the almost sure order.

We assume that the decision space $\Xf$ is a Banach space. Frequently, $\Xf=\Rb^n$, but it is convenient
to consider more general cases in view of applications to dynamic models.  We pay special attention to the case when
$\Xf = \Lc_{p'}(\Omega,\Gc,P;\Yf)$, where $p'\in [p,\infty]$ and $\Yf$ is a separable Banach space. The $\sigma$-subalgebra $\Gc \subseteq \Fc$ models the information available when the decision is made.
If $\Gc=\{\emptyset,\Omega)$ then $\Xf =\Yf$.

The mapping $F:\Xf\to\Lc_p(\Omega,\Fc,P)$, where $p\ge 1$,
describes the dependence of a random ``cost'' in a stochastic system on the decision variables $x\in \Xf$, that is,
$\big[F(x)\big](\omega)$ is the cost associated with decision $x$ and elementary event $\omega\in \Omega$.
Usually, the mapping $F(\Cdot)$ results from a composition of a cost function $c:\Yf \times \Df\to \Rb$ and a random data vector
$D: \Omega \to \Df$, where $\Df$ is a space of the data vector realizations. Then
$\big[F(x)\big](\omega) \eqdef c(x(\omega),D(\omega))$, $\omega\in \Omega$.
The mapping $G:\Xf\to\Lc_p(\Omega,\Fc,P)$ with $p\ge 1$ may represent a performance functional so that $\big[G(x)\big](\omega)$  measures a relevant system's feature for the elementary event $\omega\in \Omega$.

We call the  functionals $\varrho:\Lc_p(\Omega,\Fc,P)\to \Rb$ and $\rho_t:\Lc_p(\Omega,\Fc,P)\to \Rb$, which are convex and monotonic, \emph{risk functionals}. We point out that the mathematical expectation is a special case of a risk functional and our results generalize the results on subdifferentiability involving normal integrands.

While our results may be applicable to many areas, we note that the adopted structure accommodates optimization of coherent or convex measures of risk and risk constraints in the form of stochastic orders, which are of particular interest to us.
Modern theory of mathematical models of risk starts with the
first axiomatic proposals due to \cite{KijOhn:1993} in the context of mean--risk models. That set of axioms does not include the monotonicity axiom. Monotonicity with respect to stochastic dominance
was first proposed in \cite{OgRu:1999}. The complete sets of axioms are due to
\cite{ADEH:1999} for measures defined on finite probability spaces. It was extended to general spaces in \cite{Delbaen:2002,FolSch:2002,Leitner:2005,RuSh:2006a}.
%The Namioka--Klee theorem is due to \cite{namioka1957partially}; see also  Theorem~\ref{t:Namioka-lattice}.
Risk measures on the space of the quantile functions were considered in \cite{dentcheva2014risk}.
Further extensions include systemic measures of risk (see, e.g, \cite{Chen,kromer2016systemic,Burgert,ruschendorf2013mathematical,ekeland2011law,almen2023risk}) and measures on a probability space with a variable probability measure, called
risk forms \cite{dentcheva2020risk,dentcheva2023mini}.

Risk constraints based on stochastic orders were introduced in \cite{ddarsiam} for integer orders and further analyzed in \cite{ddarmp}. Optimality conditions in Lagrangian form for a basic problem formulation  were established first in \cite{ddarsiam}; for the inverse formulation of the second order stochastic dominance optimality conditions are provided in \cite{Dentcheva2006InverseSD}. Optimization with stochastic dominance constraints relates also to risk functionals; another form of optimality conditions showcasing this relation is established in \cite{dentcheva2008duality}. For relations to chance constraints or Average (Conditional) Value-at-Risk constraints, we refer to \cite{ddarport}.
In this paper, we shall provide a subdifferential form of optimality conditions for a general setting with inverse stochastic dominance constraints.

In successive sections, we address questions associated with the subdifferentiability of the compositions $\varrho\circ F$ and $\rho_t\circ G$ as well as and their implications for the optimality conditions for problem \eqref{general-risk-opt}.
In the first part of our paper, we focus on the  calculation of the subdifferential of the
composite risk function $\varphi(\Cdot)=\varrho(F(\Cdot))$, where $\varrho:\Lc_p(\Omega,\Fc,P)\to \Rb$ is a measure of risk.
In the second part of the paper, we analyze the compositions arising in the risk constraints in \eqref{general-risk-opt}.

In what follows, all equations and inequalities between elements of the space $\Lc_p(\Omega,\Fc,P)$, with $p\ge 1$, are understood in the almost sure sense.

\section{Composition of a Risk Functional with a Convex Operator}
\label{s:convex-operators}

First, we analyze the differentiability of the objective function and the constraint functions in problem \eqref{general-risk-opt}.
Convex lattice-valued operators and their subgradients were already considered by \cite{valadier1972sous}, and a thorough exposition can be found in the book \cite{lev:85}.
We recall the basic definitions and elementary properties.
\begin{definition}
\label{d:convex-mapping}
A mapping $F:\Xf\to\Lc_p(\Omega,\Fc,P)$ is \emph{convex} if for all $x,y\in \Xf$ and all $\alpha\in [0,1]$ we have
\begin{equation}
\label{convex-mapping}
 F(\alpha x + (1-\alpha) y) \le \alpha F(x) + (1-\alpha)F(y).
\end{equation}
\end{definition}
For a convex mapping $F:\Xf \to \Lc_p(\Omega,\Fc,P)$, at each $x\in \Xf$ and for any $h\in \Xf$, we can construct the differential quotients:
\begin{equation}
\label{Qtxh}
Q_t(x;h) \eqdef \frac{1}{t} \big[F({x}+th)-F({x})\big], \quad t >0.
\end{equation}
They are nondecreasing (in the lattice sense) functions of $t$, and satisfy for $t\in (0,1]$ the
inequalities
\[
F(x)- F(x-h) \le Q_t(x;h) \le F(x+h) - F(x).
\]
Thus the directional derivative
\begin{equation}
\label{F-prime}
F'(x;h) = \lim_{t\downarrow 0} Q_t(x;h), \quad h \in \Xf,
\end{equation}
is well-defined and belongs to $\Lc_p(\Omega,\Fc,P)$, as the limit in the strong and the order sense.
%see \cite{kawai1957locally}.
The mapping $F'(x;\Cdot)$ is convex and positively homogeneous by construction.
If $F(\Cdot)$ is continuous at $x$, then $F'(x;\Cdot)$ is continuous at 0.

Analogously to the scalar case, we define subgradients of vector-valued convex mappings.

\begin{definition}
\label{d:F-subgradient}
Suppose $F:\Xf\to \Lc_p(\Omega,\Fc,P)$ is a convex mapping. A continuous linear operator $S:\Xf \to \Lc_p(\Omega,\Fc,P)$ such that for all $h\in \Xf$
\begin{equation}
\label{subgradientL-def0}
Sh \le F(x+h) - F(x)
\end{equation}
is called the \emph{subgradient} of $F(\Cdot)$ at ${x}$. The set of all such operators is called the \emph{subdifferential}
of $F(\Cdot)$ at ${x}$ and is denoted by $\partial F(x)$.
\end{definition}
It is evident that $S\in \partial F(x)$ if and only if for all $h\in \Xf$
\begin{equation}
\label{subgradientL-def}
Sh \le F'(x;h).
\end{equation}
{ This characterization implies that
the set $\partial F(x)$ is convex and compact in the weak operator topology; see  \cite[Lem. 1.1]{lev:85}. The fact that
 $\Lc_p(\Omega,\Fc,P)$ is a topological complete vector lattice plays a key role.}

The following fundamental fact is due to \cite[Thm. 6]{valadier1972sous}.
\begin{theorem}
\label{t:sub_exists}
Suppose $F:\Xf\to \Lc_p(\Omega,\Fc,P)$ is convex and continuous at $x$. Then $\partial F(x)\ne \emptyset$ and for every $h\in \Xf$ we can find ${S}\in \partial F(x)$ such that $F'(x;h) = {S}h$.
\end{theorem}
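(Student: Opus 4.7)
The plan is to build $S$ by a Hahn--Banach-type extension argument, using that $p(\cdot)\eqdef F'(x;\cdot)$ is a sublinear operator from $\Xf$ into the order-complete lattice $\Lc_p(\Omega,\Fc,P)$, continuous at $0$. I would first reduce the problem to extending a linear operator dominated by $p$.

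First, fix $h_0\in\Xf$. On the one-dimensional subspace $\Xf_0=\{\lambda h_0:\lambda\in\Rb\}$, define
\[
S_0(\lambda h_0)\eqdef \lambda\, p(h_0).
\]
This operator is linear on $\Xf_0$, and it satisfies $S_0 h\le p(h)$ for all $h\in\Xf_0$: for $\lambda\ge 0$ this is positive homogeneity of $p$, while for $\lambda<0$ we use $0=p(0)\le p(h_0)+p(-h_0)$, so $-p(-h_0)\le p(h_0)$, which after multiplying by $-\lambda>0$ gives $\lambda p(h_0)\le -\lambda p(-h_0)=p(\lambda h_0)$. Equality holds at $h_0$ by construction.

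Next, I would invoke the Hahn--Banach--Kantorovich extension theorem for operators taking values in an order-complete vector lattice, using the key fact (noted in the excerpt) that $\Lc_p(\Omega,\Fc,P)$ is a topological complete vector lattice. Apply Zorn's lemma to the family of pairs $(\Xf_\alpha,S_\alpha)$, where $\Xf_\alpha\supset\Xf_0$ is a subspace and $S_\alpha:\Xf_\alpha\to\Lc_p$ is linear with $S_\alpha h\le p(h)$ on $\Xf_\alpha$. The one-step extension across a single direction $g\notin\Xf_\alpha$ amounts to choosing an element $y\in\Lc_p$ with
\[
\sup_{h\in\Xf_\alpha}\bigl(S_\alpha h - p(h-g)\bigr)\;\le\; y\;\le\; \inf_{h\in\Xf_\alpha}\bigl(p(h+g)-S_\alpha h\bigr),
\]
where the supremum and infimum exist in $\Lc_p$ by order completeness and are ordered correctly because subadditivity of $p$ gives $S_\alpha(h_1+h_2)\le p(h_1+g)+p(h_2-g)$ for all $h_1,h_2\in\Xf_\alpha$. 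A maximal element of the chain is the desired global extension $S:\Xf\to\Lc_p$ with $Sh\le p(h)$ everywhere and $Sh_0=p(h_0)$.

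Finally, I would check continuity of $S$. Since $F$ is continuous at $x$, $p=F'(x;\cdot)$ is continuous at $0$, so there is a neighborhood $U$ of $0$ and a constant $M$ with $\|p(h)\|_p\le M$ for $h\in U\cup(-U)$. From $Sh\le p(h)$ and $-Sh=S(-h)\le p(-h)$ we obtain the pointwise bound $|Sh|\le p(h)\vee p(-h)$ a.s., whence $\|Sh\|_p\le \|p(h)\|_p+\|p(-h)\|_p\le 2M$ on $U$. By linearity, $S$ is bounded on $\Xf$. Thus $S\in\partial F(x)$ and $Sh_0=F'(x;h_0)$, as required.

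The main obstacle is the extension step: unlike in the scalar Hahn--Banach theorem, one cannot just pick a real number between two reals, but must exhibit an element of $\Lc_p$ sandwiched between a supremum and an infimum of possibly uncountable families. This is precisely where the order-completeness of $\Lc_p$ (Dedekind completeness of the lattice) is indispensable, and why the result would fail for operators into a general Banach space without lattice structure.
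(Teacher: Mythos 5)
Your proof is correct and follows essentially the same route as the paper's source for this result: the paper gives no proof of its own but cites Valadier (1972, Thm.~6), whose argument is exactly the Hahn--Banach--Kantorovich extension of the linear operator $\lambda h_0\mapsto\lambda F'(x;h_0)$ dominated by the sublinear operator $F'(x;\cdot)$ into the order-complete lattice $\Lc_p(\Omega,\Fc,P)$, followed by the continuity estimate you give. This is also the same tool (Kantorovich--Akilov, Thm.~X.5.7) that the paper invokes later in the proof of Theorem~\ref{t:F-subgradient-structure}, so your reconstruction is consistent with the paper's framework.
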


We proceed now to new results in the setting of $\Lc_p$-spaces.

{ For an operator
$S:\Xf \to \Lc_p(\Omega,\Fc,P)$ and a function $\alpha \in \Lc_\infty(\Omega,\Fc,P)$ we define the operator
$\alpha S :\Xf \to \Lc_p(\Omega,\Fc,P)$ by $\big[(\alpha S)h\big](\omega) = \alpha(\omega)\big[(Sh)(\omega)\big]$, for $h\in \Xf$, $\omega \in \Omega$. If $S$ is continuous, so is $\alpha S$.}
 \begin{lemma}\label{l:generalized-convexity}
For all ${x}\in \Xf$, the set $\partial F(x)$ is convex in the following generalized sense: if $S_1,S_2\in \partial F(x)$
and $\alpha:\Omega\to[0,1]$ is measurable, then $\alpha S_1 + (1-\alpha) S_2\in \partial F(x)$ as well.
\end{lemma}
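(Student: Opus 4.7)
The plan is to verify the two requirements of Definition \ref{d:F-subgradient} for the operator $T \eqdef \alpha S_1 + (1-\alpha) S_2$: that it is a continuous linear operator from $\Xf$ to $\Lc_p(\Omega,\Fc,P)$, and that it satisfies the subgradient inequality $Th \le F(x+h) - F(x)$ a.s. for every $h \in \Xf$.

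First I would check that $T$ is well defined and continuous. Linearity in $h$ is immediate, since multiplication by the fixed function $\alpha$ preserves the linear structure pointwise. Since $\alpha \in \Lc_\infty$ with $0\le\alpha\le 1$, and $S_1 h, S_2 h \in \Lc_p$, the pointwise products $\alpha (S_1 h)$ and $(1-\alpha)(S_2 h)$ lie in $\Lc_p$, with
\[
\|Th\|_p \le \|\alpha\|_\infty \|S_1 h\|_p + \|1-\alpha\|_\infty \|S_2 h\|_p \le \|S_1 h\|_p + \|S_2 h\|_p,
\]
so continuity is inherited from the continuity of $S_1$ and $S_2$. This uses the observation stated just before the lemma that $\alpha S$ is continuous whenever $S$ is.

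Next I would establish the subgradient inequality pointwise. Fix $h \in \Xf$. By assumption, $S_i h \le F(x+h)-F(x)$ in the almost sure (lattice) sense for $i=1,2$, i.e.\ there exists a common full-measure set $\Omega_0$ on which both pointwise inequalities hold. On $\Omega_0$, since $\alpha(\omega),\,1-\alpha(\omega) \ge 0$, multiplying the two inequalities by $\alpha(\omega)$ and $1-\alpha(\omega)$ respectively and adding yields
\[
\alpha(\omega)(S_1 h)(\omega) + (1-\alpha(\omega))(S_2 h)(\omega) \le \bigl(\alpha(\omega)+1-\alpha(\omega)\bigr)\bigl[F(x+h)-F(x)\bigr](\omega),
\]
so $(Th)(\omega) \le [F(x+h)-F(x)](\omega)$ on $\Omega_0$. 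Hence $Th \le F(x+h)-F(x)$ almost surely, i.e.\ $T \in \partial F(x)$.

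There is no genuine obstacle here; the only substantive point is noticing that the definition of subgradient uses the \emph{almost sure} lattice order rather than a norm-based inequality, which is exactly what allows the pointwise weights $\alpha(\omega),1-\alpha(\omega)$ to replace ordinary scalar coefficients. This is the lattice-theoretic feature of $\Lc_p(\Omega,\Fc,P)$ that the authors highlight after Theorem \ref{t:sub_exists}.
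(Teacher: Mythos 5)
Your proof is correct and follows essentially the same route as the paper's: a pointwise convex combination of the two subgradient inequalities using the measurable weights $\alpha(\omega)$ and $1-\alpha(\omega)$, which is legitimate precisely because the inequalities are in the almost sure lattice order. The only cosmetic difference is that the paper works with the equivalent characterization $S_ih \le F'(x;h)$ from \eqref{subgradientL-def} while you use Definition \ref{d:F-subgradient} directly, and you spell out the continuity of $\alpha S_1+(1-\alpha)S_2$, which the paper delegates to the remark preceding the lemma.
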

{ \begin{proof}
Using \eqref{subgradientL-def}, for all $h\in \Xf$ we obtain
\begin{multline*}
[\alpha S_1 + (1-\alpha) S_2 ]h  = [\alpha S_1]h + [(1-\alpha) S_2 ]h =
\alpha [S_1 h ] + (1-\alpha) [S_2 h ]  \\
\le \alpha F'(x;h) + (1-\alpha) F'(x;h) = F'(x;h),
\end{multline*}
and thus $\alpha S_1 + (1-\alpha) S_2 \in \partial F(x)$.
\end{proof}}

We pass now to the analysis of the composition
\begin{equation}
\label{phi-def}
\varphi(x)\eqdef \varrho\big(F(x)\big), \quad x\in \Xf,
\end{equation}
where $F:\Xf\to \Lc_p(\Omega,\Fc,P)$ is a convex operator, and $\varrho: \Lc_p(\Omega,\Fc,P)\to \Rb$ is a risk functional. We say that a risk functional is
nondecreasing, if $Z \le V$ implies that $\varrho(Z) \le \varrho(V)$. By \cite[Prop. 3.1]{RuSh:2006a}, a convex and nondecreasing risk functional
on $\Lc_p(\Omega,\Fc,P)$, where $p\in [1,\infty]$, is subdifferentiable everywhere.
It can be easily verified that if  the mapping  $F:\Xf\to\Lc_p(\Omega,\Fc,P)$ is convex and  $\varrho:\Lc_p(\Omega,\Fc,P)\to {\Rb}$
is convex and nondecreasing, then  the composite  function
$\varrho\circ F$ is convex.
To calculate its subdifferential, we first consider
its directional derivatives.

\begin{lemma}
\label{l:phi-dir-der}
Suppose the operator $F:\Xf\to \Lc_p(\Omega,\Fc,P)$ is convex and continuous at $x$, and the risk functional $\varrho:\Lc_p(\Omega,\Fc,P) \to {\Rb}$
is convex and nondecreasing. Then for all $x,h\in\Xf$
\begin{equation}
\label{phi-prime}
\varphi'({x};h) = \varrho'\big(F({x});F'({x};h)\big).
\end{equation}
\end{lemma}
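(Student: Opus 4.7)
Write $Z \eqdef F(x)$ and $D \eqdef F'(x;h)$. The difference quotient whose limit we seek is
\[
\frac{\varphi(x+th) - \varphi(x)}{t} \;=\; \frac{\varrho\bigl(Z + t Q_t(x;h)\bigr) - \varrho(Z)}{t},
\]
and since $Q_t(x;h) \to D$ in $\Lc_p(\Omega,\Fc,P)$ but depends on $t$, a direct substitution in the definition of $\varrho'(Z;\cdot)$ is not available. The plan is to bound the difference quotient from below and from above by quantities that both converge to $\varrho'(Z;D)$.

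For the lower bound, observe that the lattice-monotonicity of $t\mapsto Q_t(x;h)$ together with $Q_t(x;h)\downarrow D$ as $t\downarrow 0$ yields $Q_t(x;h)\ge D$ for every $t>0$. By the monotonicity of $\varrho$ this gives $\varrho(Z+tQ_t(x;h))\ge\varrho(Z+tD)$, and after dividing by $t$ and letting $t\downarrow 0$ we obtain $\liminf_{t\downarrow 0}[\varphi(x+th)-\varphi(x)]/t\ge \varrho'(Z;D)$. For the upper bound, I would use that a finite-valued convex monotonic functional on $\Lc_p(\Omega,\Fc,P)$ is continuous (as is implicit in the subdifferentiability statement cited from \cite{RuSh:2006a}) and hence locally Lipschitz around $Z$ with some constant $L$; this gives, for sufficiently small $t$,
\[
\bigl|\varrho(Z+tQ_t(x;h)) - \varrho(Z+tD)\bigr|\le L\,t\,\|Q_t(x;h)-D\|_p,
\]
and dividing by $t$ and letting $t\downarrow 0$ yields $\limsup_{t\downarrow 0}[\varphi(x+th)-\varphi(x)]/t\le\varrho'(Z;D)$. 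Combining the two estimates proves \eqref{phi-prime}.

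The main technical point is handling the fact that the argument $Q_t(x;h)$ of $\varrho$ moves with $t$: the monotonicity of $\varrho$ alone, applied via the crude bound $Q_t(x;h)\le F(x+h)-F(x)$ valid for $t\in(0,1]$, is not sharp enough to yield the correct limit. Local Lipschitz continuity of $\varrho$ in the $\Lc_p$-norm resolves this cleanly. If one prefers a purely order-theoretic route, an alternative is to freeze an auxiliary parameter $s>0$, use $Q_t(x;h)\le Q_s(x;h)$ for $0<t\le s$ together with the monotonicity of $\varrho$ to deduce $\limsup_{t\downarrow 0}[\varphi(x+th)-\varphi(x)]/t\le \varrho'(Z;Q_s(x;h))$, and then pass to the limit $s\downarrow 0$ using the continuity of the sublinear functional $\varrho'(Z;\cdot)$ at $D$, which in turn follows from the local Lipschitzness of $\varrho$ at $Z$.
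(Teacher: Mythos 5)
Your proposal is correct. The lower bound is identical to the paper's: both use $Q_t(x;h)\ge F'(x;h)$ (from the monotonicity of the difference quotients) together with the monotonicity of $\varrho$ and the convexity of $\varrho$ to get $\liminf_{t\downarrow 0}\tfrac{1}{t}[\varphi(x+th)-\varphi(x)]\ge\varrho'(F(x);F'(x;h))$. The upper bounds differ in mechanism. The paper freezes an auxiliary parameter $\bar{\tau}$, uses $Q_t(x;h)\le Q_{\bar{\tau}}(x;h)$ for $t\le\bar{\tau}$ and the monotonicity of $\varrho$ to reduce to a fixed direction $F'(x;h)+r(\bar{\tau},h)$, splits off the error term by subadditivity of $\varrho'(F(x);\Cdot)$, and then takes two nested limits ($t\downarrow 0$, then $\bar{\tau}\downarrow 0$, the latter using $\varrho'(F(x);r(\bar{\tau},h))\downarrow 0$). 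This is exactly the ``purely order-theoretic route'' you sketch as an alternative. Your primary route instead invokes the local Lipschitz continuity of $\varrho$ near $F(x)$ (legitimate: $\varrho$ is finite, convex, and continuous by the cited result of Ruszczy\'nski--Shapiro, hence locally Lipschitz) to estimate $|\varrho(F(x)+tQ_t(x;h))-\varrho(F(x)+tF'(x;h))|\le Lt\|Q_t(x;h)-F'(x;h)\|_p$ and conclude in a single limit, using that $Q_t(x;h)\to F'(x;h)$ in norm. This is shorter and does not need the order-monotonicity of $t\mapsto Q_t(x;h)$ for the upper estimate (indeed the two-sided Lipschitz bound would also subsume your lower bound); the paper's argument keeps the reasoning within the lattice framework and makes explicit that only monotonicity of $\varrho$ and continuity of the sublinear functional $\varrho'(F(x);\Cdot)$ at $0$ are used. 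Both ultimately rest on the same continuity fact, so neither is more general; yours is the more economical write-up.
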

\begin{proof}
The proof uses a standard argument, but we provide it for completeness. Recalling the definition \eqref{F-prime}, for all $\tau\ge 0$ we have
\[
F({x}+\tau h)  = F({x}) + \tau F'({x};h) + \tau r(\tau,h),
\]
with $r(\tau,h)\downarrow 0$ when $\tau\downarrow 0$. Furthermore, by virtue of Theorem \cite[Prop. 3.1]{RuSh:2006a}, $\varrho(\Cdot)$ is
continuous and subdifferentiable everywhere, and for any $d\in \Lc_p(\Omega,\Fc,P)$ we have
\[
\varrho\big(F({x})+ \tau d\big) = \varrho\big(F({x})\big) + \tau \varrho'\big(F({x});d\big) + \tau R(\tau,d),
\]
where $R(\tau,d)\downarrow 0$ when $\tau\downarrow 0$. Suppose $0<\tau \le \bar{\tau}$. Since $r(\Cdot,h)$ is nondecreasing,
the two expansions yield
\begin{align*}
\lefteqn{\varphi({x}+\tau h) - \varphi({x}) = \varrho\big(F({x}+\tau h)\big) - \varrho\big(F({x})\big)}\quad \\
&=\varrho\big(F({x}) + \tau F'({x};h) + \tau r(\tau,h)\big) - \varrho\big(F({x})\big)\\
&\le\varrho\big(F({x}) + \tau F'({x};h) + \tau r(\bar{\tau},h)\big) - \varrho\big(F({x})\big)\\
&= \tau \varrho'\big(F({x}); F'({x};h) +  r(\bar{\tau},h)\big) + \tau R\big(\tau,F'({x};h) +  r(\bar{\tau},h)\big).
\end{align*}
The directional derivative $\varrho'\big(F(x); \Cdot)$ is convex and positively homogeneous, hence subadditive, and thus
\begin{multline*}
\varphi({x}+\tau h) - \varphi({x}) \\
\le
\tau \varrho'\big(F({x}); F'({x};h)\big) + \tau  \Big[ \varrho'\big(F({x});r(\bar{\tau},h)\big) +R\big(\tau,F'({x};h) +  r(\bar{\tau},h)\big) \Big].
\end{multline*}
On the other hand, the convexity of both functions and the monotonicity of $\varrho(\Cdot)$ imply that
%\begin{multline*}
\[
\varphi({x}+\tau h) - \varphi({x}) \ge \varrho\big(F({x}) + \tau F'({x};h)\big) - \varrho\big(F({x})\big)
\ge \tau \varrho'\big(F({x}); F'({x};h)\big).
\]
%\end{multline*}
Combining the last two inequalities and dividing by $\tau$ we obtain
\begin{multline*}
0 \le \frac{1}{\tau} \big[ \varphi({x}+\tau h) - \varphi({x})\big] - \varrho'\big(F({x}); F'({x};h)\big)\\
\le  \varrho'\big(F({x});r(\bar{\tau},h)\big) +R\big(\tau,F'({x};h) +  r(\bar{\tau},h)\big).
\end{multline*}
After passing to the limit with $\tau\downarrow 0$, we get
\[
0 \le \varphi'({x};h) - \varrho'\big(F({x}); F'({x};h)\big) \le \varrho'\big(F({x});r(\bar{\tau},h)\big).
\]
As $\varrho'\big(F({x});r(\bar{\tau},h)\big)\downarrow 0$, when $\bar{\tau}\downarrow 0$, equation \eqref{phi-prime} is true.
%\hfill{}
\end{proof}

This allows us to calculate the subdifferential of the composition. { The following result is known; we may refer here
to \cite{lemaire1985application}, \cite[Thm. 1.9]{lev:85}, and \cite{combari1994sous}. We provide a version suitable for
our risk measure applications with a simple proof, which
improves on \cite[Thm. 3.3]{RuSh:2006a} and \cite[Thm 6.14]{shapiro2021lectures}.}
\begin{theorem}
\label{t:composition_subdif}
Suppose the operator $F:\Xf\to \Lc_p(\Omega,\Fc,P)$, where $\Xf$ is a Banach space and $p\in [1,\infty)$, is convex and continuous at $x$, and the risk functional $\varrho:\Lc_p(\Omega,\Fc,P) \to {\Rb}$
is convex and nondecreasing.
Then the composite function $\varphi=\varrho\circ F$ is subdifferentiable at ${x}$ and
\begin{equation}
\label{composition_subdif}
\partial \varphi ({x}) =
 \bigcup_{\nobarfrac{\zeta\in\partial\varrho (F({x}))}{S \in \partial F(x)}} S^*\zeta.
\end{equation}
\end{theorem}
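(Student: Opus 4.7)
The plan is to prove the two inclusions separately, with the easy direction using the nonnegativity of subgradients of nondecreasing functionals, and the hard direction combining a Moreau--Rockafellar decomposition with the vector-valued subdifferential structure of $F$.

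For the inclusion $\supseteq$, I would take $\zeta \in \partial\varrho(F(x))$ and $S \in \partial F(x)$. Monotonicity of $\varrho$ forces $\zeta \ge 0$ in the dual order (if $d \le 0$, then $\varrho(F(x)+d) \le \varrho(F(x))$, so $\langle\zeta,d\rangle \le 0$). Combining $Sh \le F'(x;h)$ with $\zeta \ge 0$, with the subgradient inequality $\langle\zeta,d\rangle \le \varrho'(F(x);d)$, and with Lemma \ref{l:phi-dir-der}, yields
\[
\langle S^*\zeta, h\rangle = \langle \zeta, Sh\rangle \le \langle \zeta, F'(x;h)\rangle \le \varrho'\bigl(F(x);F'(x;h)\bigr) = \varphi'(x;h) \le \varphi(x+h) - \varphi(x),
\]
which shows $S^*\zeta \in \partial\varphi(x)$.

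For the reverse inclusion, take $g \in \partial\varphi(x)$. The key is to separate the $\varrho$-part and the $F$-part. Introduce the convex function on $\Xf \times \Lc_p(\Omega,\Fc,P)$,
\[
\Psi(h,d) = \varrho'\bigl(F(x);d\bigr) + I_E(h,d), \qquad E = \bigl\{(h,d) : F'(x;h) \le d\bigr\}.
\]
Since $\varrho'(F(x);\cdot)$ is nondecreasing, $\inf_d \Psi(h,d) = \varrho'(F(x);F'(x;h)) = \varphi'(x;h)$ by Lemma \ref{l:phi-dir-der}, and the infimum is attained at $d = F'(x;h)$; hence $\langle g,h\rangle \le \Psi(h,d)$ with equality at $(h,d)=(0,0)$, i.e.\ $(g,0) \in \partial\Psi(0,0)$. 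Since $\varrho'(F(x);\cdot)$ is finite and continuous on all of $\Lc_p(\Omega,\Fc,P)$, the Moreau--Rockafellar sum rule gives
\[
\partial\Psi(0,0) = \bigl(\{0\} \times \partial\varrho(F(x))\bigr) + N_E(0,0).
\]
A direct computation of the normal cone shows $N_E(0,0) = \{(a,-\zeta) : \zeta \ge 0,\ \langle a,h\rangle \le \langle\zeta,F'(x;h)\rangle\ \forall h\}$; decomposing $(g,0)$ yields a single $\zeta \in \partial\varrho(F(x))$ with $\langle g,h\rangle \le \langle\zeta,F'(x;h)\rangle$ for all $h \in \Xf$.

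It remains to realize $g$ as $S^*\zeta$ for some $S \in \partial F(x)$. Consider the continuous sublinear functional $p_\zeta(h) = \langle\zeta,F'(x;h)\rangle$ on $\Xf$, so that $g \in \partial p_\zeta(0)$. By Theorem \ref{t:sub_exists}, for every $h$ there is $S_h \in \partial F(x)$ with $S_h h = F'(x;h)$; combined with $\zeta \ge 0$ and $Sh \le F'(x;h)$, this gives
\[
p_\zeta(h) = \sup_{S \in \partial F(x)} \langle S^*\zeta, h\rangle.
\]
The set $K = \{S^*\zeta : S \in \partial F(x)\}$ is convex (by convexity of $\partial F(x)$) and weak-$*$ compact in $\Xf^*$, as the image of the weak-operator-compact set $\partial F(x)$ under the weak-operator-to-weak-$*$ continuous map $S \mapsto S^*\zeta$. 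Thus $p_\zeta$ is the support function of $K$, and the bipolar theorem gives $\partial p_\zeta(0) = K$. In particular $g = S^*\zeta$ for some $S \in \partial F(x)$, completing the proof.

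The main obstacle is the last step: identifying the scalar subdifferential $\partial p_\zeta(0)$ with the set $K$ of composite adjoints. This rests on the convexity and weak-$*$ compactness of $K$, both of which are ultimately inherited from the corresponding properties of $\partial F(x)$ in the weak operator topology (the compactness from \cite[Lem.~1.1]{lev:85}), together with the attainment in Theorem \ref{t:sub_exists} that identifies $p_\zeta$ as the support function of $K$.
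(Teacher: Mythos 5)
Your proof is correct, but it follows a genuinely different route from the paper's. The paper computes the directional derivative in one stroke as a double maximum, $\varphi'(x;h)=\max_{\zeta}\max_{S}\langle S^*\zeta,h\rangle$, recognizes it as the support function of the whole set $\Dc$ on the right of \eqref{composition_subdif}, and then proves directly that $\Dc$ is convex and closed; the convexity step is the delicate one and requires the \emph{generalized} convexity of $\partial F(x)$ under measurable $[0,1]$-valued coefficients (Lemma \ref{l:generalized-convexity}), applied with the weights $\theta(\omega)=\alpha\zeta_1(\omega)/\zeta(\omega)$ to merge two pairs $(\zeta_1,S_1)$, $(\zeta_2,S_2)$ into a single pair. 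You instead prove the two inclusions separately: the easy one by the monotonicity of $\zeta$, and the hard one by first isolating a single $\zeta\in\partial\varrho(F(x))$ via a Moreau--Rockafellar decomposition of the perturbation function $\Psi$ on $\Xf\times\Lc_p$, and only then identifying $g$ with some $S^*\zeta$ through the support function of the slice $K_\zeta=\{S^*\zeta: S\in\partial F(x)\}$. Because $\zeta$ is fixed at that stage, you need only the \emph{ordinary} convexity of $\partial F(x)$ together with its weak-operator compactness (\cite[Lem.~1.1]{lev:85}), so your argument dispenses with the generalized-convexity lemma entirely; the price is the extra product-space machinery (continuity of $\varrho'(F(x);\Cdot)$ for the sum rule, the normal-cone computation for the cone $E$, and the weak-operator-to-weak-$*$ continuity of $S\mapsto S^*\zeta$), all of which you invoke correctly but should state explicitly in a polished write-up. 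Both proofs establish nonemptiness of $\partial\varphi(x)$ as a byproduct of Theorem \ref{t:sub_exists} and the subdifferentiability of $\varrho$.
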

\begin{proof}
Using Lemma \ref{l:phi-dir-der} and Theorem \ref{t:sub_exists}, we obtain the equation
\[
\varphi'({x};h)
=\max_{\zeta\in \partial \varrho(F({x}))} \Eb\big[ \zeta\,F'({x};h)\big]
= \max_{\zeta\in \partial \varrho(F({x}))} \Eb\big[ \zeta\,\essup_{S\in \partial F(x)} Sh\big].
\]
Since every $\zeta\in \partial \varrho(F({x}))$ is nonnegative, and the ``$\essup$'' above is attained, we can continue this chain of equations as follows:
\begin{multline}
\label{phi-prime-supp-g}
\varphi'({x};h)= \max_{\zeta\in \partial \varrho(F({x}))} \max_{S\in \partial F(x)} \Eb\big[ \zeta\,Sh\big]\\
= \max_{\zeta\in \partial \varrho(F({x}))} \max_{S\in \partial F(x)} \langle S^*\zeta,h \rangle
= \max_{g \in \Dc}\, \langle g,h \rangle,\qquad
\end{multline}
where $\Dc$ is the set on the right hand side of \eqref{composition_subdif}.

We shal verify that $\Dc$ is convex. Suppose $g_1 = S_1^*\zeta_1^{\phantom{*}}$, $g_2 = S_2^*\zeta_2^{\phantom{*}}$, where $\zeta_1,\zeta_2\in  \partial \varrho(F({x}))$, and $S_1,S_2\in \partial F({x})$.
 For $0<\alpha <1$ we verify whether $g = \alpha g_1 + (1-\alpha)g_2 \in \Dc$.
By the convexity of $\partial \varrho(\Cdot)$,
\[
\zeta = \alpha \zeta_1 + (1-\alpha)\zeta_2\in \partial\varrho(F({x})).
\]
Furthermore,
\[
g = \big( \theta  S_1+  (1-\theta) S_2\big)^* \zeta,
\]
with (for $\zeta(\omega)>0$)
\[
\theta(\omega) = \frac{\alpha \zeta_1(\omega)}{\zeta(\omega)},\quad \omega \in \Omega.
\]
If $\zeta(\omega) = 0$, then both $\zeta_1(\omega)=0$ and $\zeta_2(\omega)=0$, and we can set $\theta(\omega)=0$. In any case,
since $\zeta_1,\zeta_2 \ge 0$, we have $\theta\in [0,1]$ a.s.. The generalized convexity of $\partial F({x})$ (see Lemma \ref{l:generalized-convexity}) entails
\[
S = \theta  S_1+  (1-\theta) S_2 \in \partial F({x}),
\]
and thus $g\in \Dc$. Therefore, the set $\Dc$ is convex.

The closedness of $\Dc$ follows from the fact that for every $h\in \Xf$ the maximum of $\langle g, h\rangle$ over $g \in \Dc$ is attained in \eqref{phi-prime-supp-g}. Denote a maximizer by $g(h)$.
If a limit $\bar{g}$ of a sequence $\{g^k\}_{k\in \Nb}\subset \Dc$ was not an element of $\Dc$, then it could be  separated from $\Dc$ by a linear functional $\bar{h}$:
\[
\langle \bar{g},\bar{h}\rangle > \langle g,\bar{h} \rangle, \quad \forall\, g\in \Dc.
\]
This would lead to a contradiction:
\[
\langle \bar{g},\bar{h}\rangle > \langle g(\bar{h}),\bar{h} \rangle = \max_{g\in \Dc} \;\langle g,\bar{h} \rangle \ge \lim_{k\to\infty} \langle g^k,\bar{h} \rangle  = \langle \bar{g},\bar{h}\rangle.
\]
Summing up, it follows from \eqref{phi-prime-supp-g} that $\varphi'({x};\Cdot)$ is the support function of a convex closed set $\Dc$, and thus
$\Dc = \partial \varphi'({x};0) = \partial \varphi(x)$.
%{}
\end{proof}

Further analysis of the composition $\rho\,\circ F$ is dependent on the structure of the mapping $F(\Cdot)$. We address this
issue in the next subsection.

\section{Composition of a Risk Functional with a Convex Integrand}
\label{s:comp-convex-integrand}

In this section, $\Xf = \Lc_{p'}(\Omega,\Fc,P;\Yf)$, where $\Yf$ is a  separable Banach space,
%with a separable dual $\Yf^*$,
  and $p\in [1,\infty)$,  $p'\in [p,\infty]$.
We include the case of $p'=\infty$ in view of the application to the analysis in the next section. In some intermediate results, we also allow $p=\infty$.

We assume that the mapping $F:\Lc_{p'}(\Omega,\Fc,P;\Yf)\to \Lc_p(\Omega,\Fc,P)$ has the following structure:
\begin{equation}
\label{F-integrand}
\big[F(x)\big] (\omega) = f(x(\omega),\omega),\quad \omega\in \Omega,
\end{equation}
where $f:\Yf\times\Omega\to \Rb$ is a convex and continuous function of the first argument, for almost all values of the second argument, and measurable with respect to the second argument,
for all values of the first argument. Such functions are called \emph{convex integrands}.

The mapping $F(\Cdot)$ is convex in the sense of Definition \ref{d:convex-mapping}, but
our setting is more specific because it excludes a null set \emph{before} all $x,y\in \Xf$ are
considered. This allows for an explicit description of the subgradients of $F(\Cdot)$.

The differential quotients \eqref{Qtxh} of the mapping \eqref{F-integrand} have a specific form: for almost all $\omega\in \Omega$
\[
\big[ Q_t(x;h)\big](\omega) = \frac{1}{t} \big[f\big(x(\omega)+th(\omega), \omega\big)-f\big(x(\omega),\omega\big)\big], \quad t >0.
\]
By the convexity of $f(\Cdot,\omega)$ we conclude that for all $h\in \Xf$ and for almost all $\omega\in \Omega$
\begin{equation}
\label{F-prime-integrand}
\big[F'(x;h)\big](\omega)= \lim_{t\downarrow 0} \big[ Q_t(x;h)\big](\omega)= f'\big(x(\omega),\omega; h(\omega)\big).
\end{equation}

We can now establish the decomposable structure of the subdifferential.

\begin{lemma}
\label{l:local}
If $L \in \partial F(x)$ then $L$ is \emph{local} in the following sense: for all $B\in \Fc$ and all $h\in \Xf$ we have
\[
L(\1_B h) = \1_B Lh.
\]
\end{lemma}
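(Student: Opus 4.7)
The plan is to exploit the integrand structure \eqref{F-integrand} to show that the directional derivative $F'(x;\Cdot)$ vanishes wherever its argument vanishes, and then use the subgradient inequality \eqref{subgradientL-def} both for $\1_B h$ and for $-\1_B h$ to pin down $L(\1_B h)$ on $B^c$.

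More precisely, I would first observe that by \eqref{F-prime-integrand}, for any $y\in \Xf$ and a.a.\ $\omega \in \Omega$ one has $\bigl[F'(x;y)\bigr](\omega) = f'\bigl(x(\omega),\omega; y(\omega)\bigr)$, so whenever $y(\omega) = 0$ we get $\bigl[F'(x;y)\bigr](\omega) = 0$. Apply this with $y = \1_B h$: then $\bigl[F'(x; \1_B h)\bigr](\omega) = 0$ for a.a.\ $\omega \in B^c$. The subgradient inequality \eqref{subgradientL-def} together with linearity of $L$ gives
\[
L(\1_B h) \le F'(x; \1_B h) \quad \text{and} \quad -L(\1_B h) = L(-\1_B h) \le F'(x; -\1_B h),
\]
and both right-hand sides vanish a.s.\ on $B^c$. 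Therefore $\1_{B^c} L(\1_B h) = 0$.

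The same argument applied with $B^c$ in place of $B$ yields $\1_B L(\1_{B^c} h) = 0$. Writing $h = \1_B h + \1_{B^c} h$ and using linearity of $L$, we obtain $\1_B Lh = \1_B L(\1_B h) + \1_B L(\1_{B^c} h) = \1_B L(\1_B h)$, and combining with $\1_{B^c} L(\1_B h) = 0$ gives
\[
L(\1_B h) = \1_B L(\1_B h) + \1_{B^c} L(\1_B h) = \1_B Lh,
\]
which is the claimed locality.

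I do not expect a real obstacle here: the only delicate point is confirming that \eqref{F-prime-integrand} indeed holds pointwise a.s.\ (without an exceptional null set depending on $h$), but this is already stated in the paragraph preceding the lemma and is exactly the feature that distinguishes the integrand case from the abstract convex mapping case discussed in Section~\ref{s:convex-operators}.
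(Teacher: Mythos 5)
Your proposal is correct and follows essentially the same route as the paper's proof: both use the integrand structure to see that $F'(x;\1_B h)$ vanishes on $B^c$ (the paper phrases this as the identity $F'(x;\1_B h)=\1_B F'(x;h)$), then sandwich $L(\1_B h)$ between $-F'(x;-\1_B h)$ and $F'(x;\1_B h)$ to kill it off $B$, and finish by linearity via the decomposition $h=\1_B h+\1_{B^c}h$. No gaps.
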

\begin{proof}
%If $L\in \partial F(x)$, then $L \in \partial F'(x;0)$.
For all events $B\in \Fc$ and all $h\in {\Xf}$ we have $F'(x;\1_B h) = \1_B F'(x;h)$, and thus
every $L\in \partial F(x)$ satisfies the inequalities:
\begin{equation}
\label{L1B}
-\1_B F'(x;-h) \le L(\1_B h) \le \1_B F'(x;h).
\end{equation}
Consequently, $ L(\1_B h)$ vanishes outside of $B$.
As $L$ is linear,
\[
\1_B L(\1_{B}h) =\1_B  Lh - \1_B L(\1_{B^c} h) = \1_B  Lh,
\]
because the second term disappears due to \eqref{L1B} for $B^c$. Therefore, $ L(\1_B h)$ coincides with $Lh$ on $B$.
\end{proof}

The local property is essential in the following theorem. We draw the attention of the Reader that our technique
using the Radon-Nikodym theorem for vector measures, covers the notoriously difficult case of $p'=\infty$.
{ The latter case is less difficult than usual due to the continuity assumption
about the integrand $f(\Cdot,\omega)$, which does not take
the value $+\infty$.}

\begin{theorem}
\label{t:F-subgradient-structure-integrand}
Suppose $F:\Lc_{p'}(\Omega,\Fc,P;\Yf)\to \Lc_p(\Omega,\Fc,P)$ is given by \eqref{F-integrand}, $1 \le p\le p' \le\infty$,
$\Yf$ is a  separable Banach space, % with a separable dual $\Yf^*$,
$f(\Cdot,\omega)$ is convex and continuous at $x(\omega)$ for almost all $\omega\in\Omega$.
Then $S\in \partial F(x)$ if and only if a function $s\in \Lc_{r}(\Omega,\Fc,P;\Yf^*)$ exists, with $1/p'+1/r=1/p$, such that
for almost all $\omega\in \Omega$ we have $s(\omega)\in \partial f(x(\omega),\omega)$,
and for all $h\in \Lc_{p'}(\Omega,\Fc,P;\Yf)$
\begin{equation}
\label{F-sub-dec}
\big[Sh\big](\Cdot) = \langle s(\Cdot),h(\Cdot) \rangle,\quad \text{a.s.}.
\end{equation}
\end{theorem}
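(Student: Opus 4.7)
The proof has two directions. The sufficiency $(\Leftarrow)$ is routine, and my plan is simply to observe that given $s\in\Lc_r(\Omega,\Fc,P;\Yf^*)$ with $s(\omega)\in\partial f(x(\omega),\omega)$ almost surely, the pointwise subgradient inequality yields
\[
\langle s(\omega),h(\omega)\rangle \le f(x(\omega)+h(\omega),\omega)-f(x(\omega),\omega)
\]
almost surely for every $h\in\Lc_{p'}(\Omega;\Yf)$, so the operator $h\mapsto\langle s(\Cdot),h(\Cdot)\rangle$ satisfies \eqref{subgradientL-def0} and is continuous from $\Lc_{p'}(\Omega;\Yf)$ into $\Lc_p(\Omega)$ by H\"older's inequality with exponents matched via $1/p'+1/r=1/p$.

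For the nontrivial direction $(\Rightarrow)$, suppose $S\in\partial F(x)$. My plan is to restrict $S$ to constants and then bootstrap via Lemma~\ref{l:local}. Because $P$ is a probability measure, every $y\in\Yf$ lifts to a constant $\tilde y\in\Lc_{p'}(\Omega;\Yf)$, and $Ty:=S\tilde y$ defines a bounded linear operator $T:\Yf\to\Lc_p(\Omega)$ with $\|T\|\le\|S\|$. Locality of $S$ then forces $Sh=\sum_{i=1}^k\1_{B_i}Ty_i$ for every simple $h=\sum_i\1_{B_i}y_i$ with $\{B_i\}$ a finite measurable partition of $\Omega$, so the task reduces to representing $T$ as integration against a function $s:\Omega\to\Yf^*$ belonging to $\Lc_r(\Omega;\Yf^*)$.

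The substantive step is producing this $s$. My plan is to introduce the $\Yf^*$-valued vector measure
\[
\langle m(B),y\rangle := \int_B Ty \,\D P, \qquad B\in\Fc,\; y\in\Yf,
\]
which is $P$-continuous and of bounded variation (the variation bound follows from $\|Ty\|_p\le\|S\|\|y\|_\Yf$ via H\"older's inequality on $\Lc_p$). Exploiting separability of $\Yf$, I would invoke the weak$^*$-measurable form of the Radon--Nikodym theorem for $\Yf^*$-valued vector measures (in the Dunford--Pettis / Ionescu~Tulcea tradition) to obtain a weak$^*$-measurable density $s$ satisfying $\langle m(B),y\rangle=\int_B\langle s(\omega),y\rangle\,\D P(\omega)$ for every $B\in\Fc$ and $y\in\Yf$. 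This immediately gives $[Ty](\omega)=\langle s(\omega),y\rangle$ almost surely for each fixed $y$. Combining with the simple-function identity above, formula \eqref{F-sub-dec} holds on simple functions and extends to all $h\in\Lc_{p'}(\Omega;\Yf)$ by density and continuity of $S$; the bound $\|Sh\|_p\le\|S\|\|h\|_{p'}$ taken as a supremum over $\|h\|_{p'}\le 1$ through the standard duality yields $\|s(\Cdot)\|_{\Yf^*}\in\Lc_r(\Omega)$.

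To finish, I would verify $s(\omega)\in\partial f(x(\omega),\omega)$ almost surely by substituting $h=\tilde y$ into $Sh\le F(x+h)-F(x)$ for $y$ in a countable dense subset of $\Yf$, obtaining $\langle s(\omega),y\rangle\le f(x(\omega)+y,\omega)-f(x(\omega),\omega)$ off a countable union of null sets, and then extending to every $y\in\Yf$ by continuity in $y$ of both sides. The principal obstacle I anticipate is the Radon--Nikodym step itself: because $\Yf^*$ need not have the Radon--Nikodym property when $\Yf$ is only separable (e.g.\ $\Yf=\Lc_1[0,1]$ with $\Yf^*=\Lc_\infty[0,1]$), a Bochner-type density is unavailable and one must rely on the weak$^*$-measurable version; the case $p'=\infty$ is precisely where this delicacy is sharpest and where, as the authors highlight, continuity of $f(\Cdot,\omega)$ (ruling out $+\infty$ values) is what keeps the construction tractable.
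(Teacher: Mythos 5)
Your proposal is correct and, for the hard direction, follows essentially the same route as the paper: both arguments rest on the local property of $S$ (Lemma~\ref{l:local}), restrict $S$ to constant directions $y\in\Yf$ to build a $\Yf^*$-valued, $P$-continuous vector measure, invoke a Radon--Nikodym theorem for vector measures to produce the density $s$, and then extend the representation $[Sh](\Cdot)=\langle s(\Cdot),h(\Cdot)\rangle$ from simple functions to all of $\Lc_{p'}(\Omega,\Fc,P;\Yf)$, reading off $s\in\Lc_r$ from the boundedness of $S$. The one genuine divergence is the final step, where you must show $s(\omega)\in\partial f(x(\omega),\omega)$ a.s.: the paper argues by contradiction, separating $v$ from the set $\Sc$ of selectors in $\Lc_r(\Omega,\Fc,P;\Yf^*)$ and using a measurable selection of a maximizing subgradient to contradict $S h\le F'(x;h)$; you instead test the subgradient inequality on constant directions from a countable dense subset of $\Yf$ and pass to all $y$ by continuity of $f(\Cdot,\omega)$ (which holds everywhere, since a finite convex function continuous at one point is locally Lipschitz). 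Your pointwise argument is more elementary --- it avoids both the separation theorem and the measurable selection theorem --- at the mild cost of using that $P$ is a probability measure so constants lie in $\Lc_{p'}$. Two shared caveats: your appeal to ``density of simple functions'' to extend \eqref{F-sub-dec} breaks down in norm when $p'=\infty$ (finitely-valued functions are not dense in $\Lc_\infty(\Omega,\Fc,P;\Yf)$ for infinite-dimensional $\Yf$; one should pass through countably-valued approximants and locality), and the bounded-variation/countable-additivity claims for the vector measure are asserted rather than proved --- but the paper is no more detailed on either point, and your explicit flagging of the weak$^*$-measurable Radon--Nikodym issue for non-RNP duals is, if anything, more careful than the paper's bare citation of \cite{diestel1977}.
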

\begin{proof}
Every operator of the form \eqref{F-sub-dec}, with a measurable selector $s(\Cdot)\in \partial f(x(\Cdot),\Cdot)$,
is a subgradient of $F$ at $x$, because for  all
$h\in \Xf$
\[
\langle s(\omega),h(\omega)\rangle \le f'\big(x(\omega),\omega;h(\omega)\big) = \big[F'(x;h)\big](\omega),
\]
for almost all $\omega\in \Omega$,
and thus Definition \ref{d:F-subgradient} is satisfied. Since $s\in \Lc_{r}(\Omega,\Fc,P;\Yf^*)$, in view of the H\"older inequality,
the operator \eqref{F-sub-dec} is a continuous
mapping from $\Lc_{p'}(\Omega,\Fc,P;\Yf)$ to $\Lc_p(\Omega,\Fc,P)$.

The difficult part is to prove that no other subgradients exist. Suppose $L\in \partial F(x)$. Consider the functional $\ell:\Xf \to \Rb$
defined as
\[
\ell(h) = \Eb[Lh]    ,\quad  h\in \Xf.
\]
As $L$ is linear and continuous,  so is $\ell(\Cdot)$.
Due to the local property of $L$ established in Lemma \ref{l:local},
\begin{equation}
\label{ell-local}
\ell(\1_B h) = \Eb\big[L(\1_Bh)\big] = \Eb\big[\1_B Lh\big] , \quad \forall B\in \Fc.
\end{equation}
Take any $y\in \Yf$ and set a constant $h=\1_{\Omega} y$. Then
\[
\ell(\1_B y) = \Eb \big[ \1_BL(\1_\Omega y)\big].
\]
Since the functional $y \mapsto \ell(\1_B y)$ is continuous, then  $M(B) \in \Yf^*$ exists, such that
\[
\Eb \big[ \1_BL(\1_\Omega y)\big] = \langle M(B), y \rangle, \quad \forall\,y \in \Yf.
\]
It follows from the last equation that the function $B \mapsto M(B)$ is a $\Yf^*$-valued, finite, and countably additive vector measure on $\Fc$, which is absolutely continuous
with respect to $P$. By virtue of the
Radon-Nikodym theorem for vector measures \cite[Ch. III]{diestel1977}, a measurable function $v:\Omega \to \Yf^*$ exists, such that
\[
M(B) = \int_B v(\omega)\; P(\D\omega), \quad \forall\,B \in \Fc.
\]
Combining the last three equations, we obtain
\[
\ell(\1_B y) = \int_\Omega \big\langle v(\omega), \1_B(\omega) y \big\rangle \; P(\D\omega), \quad \forall\,B \in \Fc.
\]
For any simple function,
\[
h = \sum_{n=1}^N \1_{B_n} y_n,
\]
using the linearity of $\ell(\Cdot)$, we get
\[
\ell(h) = \sum_{n=1}^M \int_\Omega \big\langle v(\omega), \1_{B_n}(\omega) y_n \big\rangle \; P(\D\omega) = \int_\Omega \big\langle v(\omega), h(\omega) \big\rangle \; P(\D\omega).
\]
Therefore,
\[
\ell(h) = \Eb\big[ \langle v(\Cdot),h(\Cdot) \rangle\big], \quad  \forall\,h\in \Xf.
\]
This, combined with \eqref{ell-local}, entails the equation
\[
\Eb\big[\1_B Lh\big] =  \Eb\big[ \1_B \langle v(\Cdot),  h(\Cdot) \rangle\big], \quad \forall B\in \Fc.
\]
Thus, for all $h \in \Xf$,
\begin{equation}
%\label{L-representation}
[Lh](\Cdot) = \langle v(\Cdot),  h(\Cdot) \rangle, \quad \text{a.s.}. \notag
\end{equation}
This representation implies that the function $\langle v(\Cdot),  h(\Cdot) \rangle$ is an element
of $\Lc_p(\Omega,\Fc,P)$, for all $h \in \Lc_{p'}(\Omega,\Fc,P;\Yf)$. Therefore, $v \in \Lc_{r}(\Omega,\Fc,P;\Yf^*)$.

Let
\[
\Sc = \big\{ s\in \Lc_{r}(\Omega,\Fc,P;\Yf^*): s(\Cdot) \in \partial f(x(\Cdot),\Cdot)\big\}.
\]
Suppose $v\notin \Sc$. Because $\Sc$ is convex and closed, by virtue of the separation theorem, a function $\bar{h}\in \Xf$
and a real number $\varepsilon>0$ exist, such that
for all $s\in \Sc$
\begin{equation}
\label{ell-separation}
\Eb\big[ \langle v, \bar{h} \rangle \big] \ge \Eb\big[ \langle s,\bar{h} \rangle \big] + \varepsilon.
\end{equation}
Let $\bar{s}$ be an element of $\Sc$ such that
\begin{equation}
\label{bars-separation}
\langle \bar{s}(\Cdot),\bar{h}(\Cdot) \rangle = f'\big(x(\Cdot),\Cdot;\bar{h}(\Cdot)\big);
\end{equation}
such a function can be found, owing to the measurable selection theorem.
Indeed, for almost all $\omega\in \Omega$ we have
\[
f'({x}(\omega),\omega;\bar{h}(\omega)) = \max_{s(\omega)\in \partial f(\bar{x}(\omega),\omega)} \langle s(\omega),\bar{h}(\omega)\rangle;
\]
the maximum is attained at some $\bar{s}(\omega)$, due to the weak$^*$ compactness of $\partial f(\bar{x}(\omega),\omega)$.
The  relations \eqref{ell-separation}--\eqref{bars-separation} imply that
\[
\Eb\big[ L \bar{h}\big]  = \Eb\big[ \langle v,  h \rangle \big] \ge \Eb\big[ F'(x;\bar{h})\big] + \varepsilon.
\]
which contradicts \eqref{subgradientL-def}. Consequently, $v \in \Sc$.
\end{proof}

We can now calculate the subdifferential of the composition $\varphi(\Cdot) = \varrho(F(\Cdot))$, where $\varrho(\Cdot)$ is a risk functional.

\begin{theorem}
\label{t:risk-Strassen-integrand}
Suppose $F:\Lc_{p'}(\Omega,\Fc,P;\Yf)\to\Lc_p(\Omega,\Fc,P)$ is given by \eqref{F-integrand}, with $p\in [1,\infty)$ and $p'\in [p,\infty]$,
$f(\Cdot,\omega)$ is convex and continuous at $x(\omega)$ for almost all $\omega\in\Omega$, $F(\Cdot)$ is continuous at $x$, and $\Yf$ is a  separable Banach space.
%with a separable dual.
Further, suppose
$\varrho:\Lc_p(\Omega,\Fc,P)\to\Rb$  is convex and nondecreasing.
Then the composite function $\varphi=\varrho\circ F$ is subdifferentiable at ${x}$ and
\begin{equation}
\label{dif-rho-integrant}
\partial \varphi ({x}) =
  \bigcup_{\zeta\in\partial\varrho (F({x}))} \hspace{-1em}
  \big\{ { g\in\Lc_{q'}(\Omega,\Fc,P;\Yf^*)}: g(\omega) \in \zeta (\omega)\,\partial f({x}(\omega),\omega),\  \text{a.s.}\big\},
\end{equation}
where $1/p'+ 1/q' = 1$.
\end{theorem}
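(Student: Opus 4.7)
The plan is to combine the chain rule of Theorem \ref{t:composition_subdif} with the pointwise representation of subgradients given by Theorem \ref{t:F-subgradient-structure-integrand}, and then verify the two inclusions separately.

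For the inclusion ``$\subseteq$'', I would start from Theorem \ref{t:composition_subdif}: any element of $\partial\varphi(x)$ has the form $S^*\zeta$ with $\zeta\in\partial\varrho(F(x))$ and $S\in\partial F(x)$. By Theorem \ref{t:F-subgradient-structure-integrand}, $S$ admits the pointwise representation $[Sh](\omega)=\langle s(\omega),h(\omega)\rangle$ for some $s\in\Lc_r(\Omega,\Fc,P;\Yf^*)$ with $1/r=1/p-1/p'$ and $s(\omega)\in\partial f(x(\omega),\omega)$ a.s. Writing the adjoint,
\[
\langle S^*\zeta,h\rangle = \Eb[\zeta\,Sh]=\Eb\big[\langle\zeta(\Cdot)s(\Cdot),h(\Cdot)\rangle\big],\quad h\in\Xf,
\]
identifies $S^*\zeta$ with the function $g=\zeta s$. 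A Hölder check gives $g\in\Lc_{q'}$: since $\zeta\in\Lc_{p/(p-1)}$ and $s\in\Lc_r$, the reciprocals add up to $(1-1/p)+(1/p-1/p')=1-1/p'=1/q'$ (with the obvious interpretation in the boundary cases $p=1$ and $p'=\infty$). Because $\zeta\ge 0$ a.s., one has $\zeta(\omega)s(\omega)\in\zeta(\omega)\,\partial f(x(\omega),\omega)$ a.s., so $g$ lies in the right-hand side of \eqref{dif-rho-integrant}.

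For the reverse inclusion ``$\supseteq$'', rather than inverting the correspondence (which would require a measurable selection on $\{\zeta=0\}$ together with an integrability argument), I would verify the subgradient inequality for $\varphi$ directly. Fix $g\in\Lc_{q'}$ and $\zeta\in\partial\varrho(F(x))$ with $g(\omega)\in\zeta(\omega)\,\partial f(x(\omega),\omega)$ a.s. Because $f(\Cdot,\omega)$ is real-valued and continuous at $x(\omega)$, the set $\partial f(x(\omega),\omega)$ is weak$^*$-compact, hence bounded; consequently $\zeta(\omega)=0$ forces $g(\omega)=0$. On $\{\zeta>0\}$, the quotient $g(\omega)/\zeta(\omega)$ is a subgradient of $f(\Cdot,\omega)$ at $x(\omega)$, which in either case yields the pointwise inequality
\[
\zeta(\omega)\big[f(x(\omega)+h(\omega),\omega)-f(x(\omega),\omega)\big]\ge\langle g(\omega),h(\omega)\rangle,\quad\text{a.s.}
\]
Taking expectations and using $\zeta\in\partial\varrho(F(x))$ gives
\[
\varphi(x+h)-\varphi(x)\ge\Eb\big[\zeta\,(F(x+h)-F(x))\big]\ge\Eb\big[\langle g,h\rangle\big],
\]
which is the required subgradient inequality, so $g\in\partial\varphi(x)$.

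The main technical obstacle is the Hölder bookkeeping in the first direction, particularly in the extreme cases $p=1$ and $p'=\infty$. The delicate point on $\{\zeta=0\}$ is handled cleanly by the continuity hypothesis on $f(\Cdot,\omega)$: it forces $\partial f(x(\omega),\omega)$ to be bounded, so the convention $0\cdot\partial f(x(\omega),\omega)=\{0\}$ is unambiguous and no recession-direction issues arise. The direct verification of the subgradient inequality also avoids having to construct a measurable, $\Lc_r$-integrable selection from $\partial f(x(\Cdot),\Cdot)$ on the set where $\zeta$ vanishes.
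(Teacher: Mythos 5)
Your proposal is correct, and for the inclusion ``$\subseteq$'' it coincides with the paper's proof: apply Theorem~\ref{t:composition_subdif} to write every element of $\partial\varphi(x)$ as $S^*\zeta$, invoke Theorem~\ref{t:F-subgradient-structure-integrand} to represent $S$ by a selection $s\in\Lc_r(\Omega,\Fc,P;\Yf^*)$, and identify $S^*\zeta$ with $\zeta s$; your H\"older bookkeeping $1/q+1/r=1-1/p'=1/q'$ makes explicit a step the paper leaves implicit. Where you genuinely depart from the paper is the reverse inclusion. The paper's proof ends with the displayed identity $\langle S^*\zeta,h\rangle=\int_\Omega\langle\zeta(\omega)s(\omega),h(\omega)\rangle\,P(\D\omega)$ and the sentence ``this verifies \eqref{dif-rho-integrant}'', which implicitly treats the map $(\zeta,S)\mapsto\zeta s$ as onto the right-hand side of \eqref{dif-rho-integrant}; inverting that correspondence for a given $g$ with $g(\omega)\in\zeta(\omega)\,\partial f(x(\omega),\omega)$ would require producing an $\Lc_r$-integrable measurable selection of $\partial f(x(\Cdot),\Cdot)$ agreeing with $g/\zeta$ on $\{\zeta>0\}$ and extending it across $\{\zeta=0\}$ --- a step the paper does not carry out. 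Your direct verification of the subgradient inequality --- multiplying the pointwise subgradient inequality for $f(\Cdot,\omega)$ by $\zeta(\omega)\ge 0$, integrating, and then using $\zeta\in\partial\varrho(F(x))$ --- bypasses this entirely, and your observation that continuity of $f(\Cdot,\omega)$ forces $\partial f(x(\omega),\omega)$ to be bounded, so that $\zeta(\omega)=0$ implies $g(\omega)=0$, disposes of the degenerate set cleanly. The net effect is a more self-contained and, on the ``$\supseteq$'' side, more rigorous argument than the one printed in the paper, at no cost in generality.
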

\begin{proof}
We use the representation of the subdifferential established in Theorem~\ref{t:composition_subdif} and the characterizations of the
subgradients of $F(\Cdot)$ established in Theorem~\ref{t:F-subgradient-structure-integrand}. For any $h\in \Xf$, any $\zeta\in \partial \varrho(F(x))$
and any $S\in \partial F(x)$ we have
\[
\langle S^*\zeta,h \rangle = \langle \zeta,Sh \rangle = \int_\Omega  \big\langle \zeta(\omega) s(\omega), h(\omega) \big \rangle \;P(\D\omega),
\]
where $s(\Cdot) \in \partial f(x(\Cdot),\Cdot)$. This verifies \eqref{dif-rho-integrant}.
%\hfill{}
\end{proof}

\section{The Case of Partial Information}
\label{sub-convex-composition}

We now assume that $\Xf = \Lc_{p'}(\Omega,\Gc,P;\Yf)$, where $\Yf$ is  a separable Banach space, and $\Gc \subseteq \Fc$ represents the information available
when the decision $x$ is made. As before, $p\in [1,\infty)$,  $p'\in [p,\infty]$. The special case of $p'=\infty$ and $\Gc = \{ \emptyset,\Omega\}$ covers the case
when $x$ is a deterministic element of $\Yf$.

The mapping
$F:\Xf\to \Lc_p(\Omega,\Fc,P)$ is defined by a convex integrand \eqref{F-integrand}.
Again, the mapping $F(\Cdot)$ is convex and we plan to analyze the structure of its subdifferential. The directional derivatives
of $F(\Cdot)$ still have the form \eqref{F-prime-integrand}.

The following result extends Theorem \ref{t:F-subgradient-structure-integrand} to the case of partial information.

\begin{theorem}
\label{t:F-subgradient-structure}
Suppose $F:\Xf\to \Lc_p(\Omega,\Fc,P)$, with $p\in [1,\infty]$, is given by \eqref{F-integrand},
$\Yf$ is a  separable Banach space, $f(\Cdot,\omega)$ is convex and continuous at $x(\omega)$ for almost all $\omega\in\Omega$,
and $F(\Cdot)$ is continuous at $x$ as well.
Then $ S\in \partial F(x)$ if and only if a function $s\in \Lc_r(\Omega,\Fc,P;\Yf^*)$ exists, with $1/p'+1/r=1/p$, such that
$s(\omega)\in \partial f(x(\omega),\omega)$ for almost all $\omega\in \Omega$,
and for all $h\in \Xf$
\begin{equation}
\label{F-sub-dec-1}
\big[Sh\big](\Cdot) = \big\langle s(\Cdot),h(\Cdot) \big\rangle,\quad \text{a.s.}.
\end{equation}
\end{theorem}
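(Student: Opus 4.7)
The plan is to mirror the proof of Theorem~\ref{t:F-subgradient-structure-integrand}, adapting it to the fact that the test space $\Xf=\Lc_{p'}(\Omega,\Gc,P;\Yf)$ is strictly smaller than $\Lc_{p'}(\Omega,\Fc,P;\Yf)$. The sufficiency direction is essentially unchanged: for any selector $s\in\Lc_r(\Omega,\Fc,P;\Yf^*)$ with $s(\omega)\in\partial f(x(\omega),\omega)$ a.s., the H\"older inequality shows that $Sh=\langle s(\Cdot),h(\Cdot)\rangle$ is a continuous linear map from $\Xf$ into $\Lc_p(\Omega,\Fc,P)$, while the pointwise subgradient inequality for $f(\Cdot,\omega)$ yields $Sh\le F'(x;h)$ a.s.

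For necessity, the new feature is that Lemma~\ref{l:local} applies only to sets $B\in\Gc$, because $\1_B h\in\Xf$ requires $B$ to be $\Gc$-measurable whenever $h\in\Xf$. The plan is therefore to probe $L\in\partial F(x)$ with the constant test functions $\1_\Omega y\in\Xf$, $y\in\Yf$, which remain admissible, and then to propagate the resulting representation by $\Gc$-locality. I would define $M:\Fc\to\Yf^*$ by
\[
\langle M(B),y\rangle = \Eb\big[\1_B L(\1_\Omega y)\big],\quad B\in\Fc,\ y\in\Yf.
\]
Continuity of the map $y\mapsto L(\1_\Omega y)$ from $\Yf$ into $\Lc_p(\Omega,\Fc,P)$ ensures $M(B)\in\Yf^*$, and $M$ is a countably additive, $P$-absolutely continuous $\Yf^*$-valued measure on $\Fc$. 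Applying the Radon--Nikodym theorem for vector measures as in Theorem~\ref{t:F-subgradient-structure-integrand} produces $s\in\Lc_r(\Omega,\Fc,P;\Yf^*)$ with $M(B)=\int_B s\,\D P$, and hence $\big[L(\1_\Omega y)\big](\omega)=\langle s(\omega),y\rangle$ a.s., for each $y\in\Yf$.

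Next I would extend this to the representation \eqref{F-sub-dec-1} on all of $\Xf$. For a $\Gc$-simple function $h=\sum_i\1_{B_i}y_i$ with $B_i\in\Gc$ and $y_i\in\Yf$, the $\Gc$-locality of $L$ gives $L(\1_{B_i}y_i)=\1_{B_i}L(\1_\Omega y_i)$, so
\[
[Lh](\omega)=\sum_i \1_{B_i}(\omega)\langle s(\omega),y_i\rangle = \langle s(\omega),h(\omega)\rangle,\quad\text{a.s.}
\]
Density of $\Gc$-simple functions in $\Xf$, together with the continuity of $L$ and the H\"older continuity of $h\mapsto\langle s,h\rangle$, then extends the identity to every $h\in\Xf$.

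Finally, to verify $s(\omega)\in\partial f(x(\omega),\omega)$ a.s., I would apply the subgradient inequality $Lh\le F'(x;h)$ to $h=\1_\Omega y$, which gives $\langle s(\omega),y\rangle\le f'(x(\omega),\omega;y)$ a.s., for each $y\in\Yf$. Picking a countable dense subset $\{y_n\}\subset\Yf$ (afforded by the separability of $\Yf$) and taking the union of the exceptional null sets, the inequality holds simultaneously for all $y_n$ on a set of full measure; continuity of the sublinear map $f'(x(\omega),\omega;\Cdot)$, guaranteed by continuity of $f(\Cdot,\omega)$ at $x(\omega)$, then promotes the bound to all $y\in\Yf$. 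The main obstacle I anticipate is precisely the weakening of Lemma~\ref{l:local}: the separation argument used in Theorem~\ref{t:F-subgradient-structure-integrand} relied on test functions $\bar h\in\Lc_{p'}(\Omega,\Fc,P;\Yf)$, which are no longer admissible. Probing $L$ only with constant directions and exploiting separability of $\Yf$ is what replaces that step, while still allowing the extracted selector $s$ to be merely $\Fc$-measurable.
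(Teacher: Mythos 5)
Your sufficiency argument and your final step (testing with constant directions $\1_\Omega y$, taking a countable dense subset of $\Yf$, and using continuity of $f'(x(\omega),\omega;\Cdot)$ to conclude $s(\omega)\in\partial f(x(\omega),\omega)$ a.s.) are sound, and your overall route is genuinely different from the paper's. The paper does not redo the Radon--Nikodym construction at all: it embeds $\Xf=\Lc_{p'}(\Omega,\Gc,P;\Yf)$ as the subspace $\Nc$ of $\Gc$-measurable elements of $\Xb=\Lc_{p'}(\Omega,\Fc,P;\Yf)$, notes that $S$ is a minorant of $\Fb'(\xb;\Cdot)$ on $\Nc$, extends $S$ to all of $\Xb$ by the Hahn--Banach theorem for operators with values in a complete vector lattice (Kantorovich--Akilov), obtains $\Sb\in\partial\Fb(\xb)$, and then simply invokes Theorem~\ref{t:F-subgradient-structure-integrand} for the extended operator. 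Your plan instead works entirely inside $\Xf$, probing $L$ with constants and propagating the representation by $\Gc$-locality.

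This difference is not cosmetic, and it leaves two concrete gaps. First, your density step fails when $p'=\infty$: finite-valued $\Gc$-simple functions are not norm-dense in $\Lc_\infty(\Omega,\Gc,P;\Yf)$ when $\Yf$ is infinite-dimensional and $\Gc$ is nontrivial, and $p'=\infty$ is precisely the case the paper needs downstream (Theorem~\ref{t:Strassen}). This is repairable---use countably valued uniform approximants, observe that locality passes to countable $\Gc$-partitions, and conclude a.s.\ along a subsequence---but as written the step does not go through. Second, and more substantively, you never establish $s\in\Lc_r(\Omega,\Fc,P;\Yf^*)$ with $1/p'+1/r=1/p$. The Radon--Nikodym theorem for vector measures only yields a Bochner-integrable density, and the representation $[Lh]=\langle s,h\rangle$ you obtain holds only for $\Gc$-measurable $h$; knowing that $\langle s(\Cdot),h(\Cdot)\rangle\in\Lc_p(\Omega,\Fc,P)$ for all such $h$ (for instance, only for constant $h$ when $\Gc$ is trivial) controls the weak integrability of $s$ but does not force $\|s(\Cdot)\|_{\Yf^*}\in\Lc_r$ when $r>1$. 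This is exactly what the Hahn--Banach extension buys the paper: the extended operator $\Sb$ is represented against all $\Fc$-measurable directions, and the integrability of $s$ is then inherited from Theorem~\ref{t:F-subgradient-structure-integrand}. To close your version you would need either to perform such an extension after all, or to supply a separate argument (e.g., an integrable local Lipschitz bound for $f(\Cdot,\omega)$ near $x(\omega)$) placing $s$ in the right space.
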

\begin{proof}
As in Theorem \ref{t:F-subgradient-structure-integrand}, we can verify that every operator $S$ of the form \eqref{F-sub-dec-1} is a subgradient of $F(\Cdot)$. We need to prove that no other subgradients exist.

Our idea is to allow $x$ and $h$ to by $\Fc$-measurable and to leverage Theorem \ref{t:F-subgradient-structure-integrand}.
Define the space $\Xb = \Lc_{p'}(\Omega,\Fc,P;\Yf)$ and denote by  $\mathbbm{x}$ and  $\mathbbm{h}$ its elements.
Let the mapping $\Fb:\Xb\to \Lc_p(\Omega,\Fc,P)$ be still defined as in \eqref{F-integrand}.

Suppose $\xb\in \Xb$ is equal to $x$ a.s..
The function $\Fb'(\xb;\Cdot)$ is convex, positively homogeneous, and continuous on $\Xb$.
Suppose $S\in \partial F(x)$. Then $Sh\le F'(x;h)$ for all  $h\in \Xf$. This means that $S(\Cdot)$ is a minorant of $\Fb'(\xb;\Cdot)$
on the subspace of $\Gc$-measurable functions $\Nc=\big\{\hb\in \Xb: \Eb[\hb|\Gc] = \hb \big\}$. By the
generalization of the Hahn--Banach theorem to lattice-valued operators (see, \cite[Thm. X.5.7]{kantorovich1984functional}), $S$ can be extended to a linear functional $\Sb$ on $\Xb$
which is a minorant of $\Fb'(\xb;\Cdot)$
and which coincides with $S$ on $\Nc$. As  $\Fb'(\xb;\Cdot)$ is continuous,  $\Sb\in \partial \Fb(\xb)$.
By Theorem \ref{t:F-subgradient-structure-integrand} at the point $\xb$,
a function $s\in \Lc_r(\Omega,\Fc,P;\Yf^*)$ exists, such that
for almost all $\omega\in \Omega$ we have $s(\omega)\in \partial f(x(\omega),\omega)$,
and for all $\hb\in \Xb$
\[
\Sb\hb = \langle s,\hb \rangle,\quad \text{a.s.}.
\]
On the subspace $\Nc$, the above formula reduces to \eqref{F-sub-dec-1}.
%\hfill{}
\end{proof}

The subdifferential of $\varphi(x) = \varrho(F(x))$ can be calculated in a similar way as in Theorem \ref{t:risk-Strassen-integrand}.

\begin{theorem}
\label{t:pr-subdifa}
Suppose $\Yf$ is a  separable Banach space, $F:\Xf\to\Lc_p(\Omega,\Fc,P)$ is given by \eqref{F-integrand}, $p\in [1,\infty)$,
$f(\Cdot,\omega)$ is convex and continuous at $x$ for almost all $\omega\in\Omega$,
$F(\Cdot)$ is continuous at $x$ as well, and
$\varrho:\Lc_p(\Omega,\Fc,P)\to\Rb$  is convex and nondecreasing.
Then the composite function $\varphi=\varrho\circ F$ is subdifferentiable at ${x}$ and\footnote{The symbol ``$\lessdot$'' means ``is a
measurable selector of.''}
\begin{equation}
\label{dif-strass}
\partial \varphi ({x})  = \bigcup_{\nobarfrac{\zeta\in \partial\varrho (F({x}))}{s(\cdot)\lessdot \partial f(x(\cdot),\cdot)}} \Eb[\zeta s|\Gc].
%\end{aligned}
\end{equation}
\end{theorem}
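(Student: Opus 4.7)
The plan is to combine the composition formula of Theorem \ref{t:composition_subdif} with the pointwise characterization of $\partial F(x)$ from Theorem \ref{t:F-subgradient-structure}, and then to identify the adjoint action $S^{*}\zeta$ with a conditional expectation by exploiting the $\Gc$-measurability of the test elements of $\Xf$.

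By Theorem \ref{t:composition_subdif}, $\partial \varphi(x) = \bigcup_{\zeta, S} S^{*}\zeta$ with $\zeta \in \partial\varrho(F(x))$ and $S \in \partial F(x)$; the continuity hypotheses on $F$ and $\varrho$ make it applicable. By Theorem \ref{t:F-subgradient-structure}, every $S \in \partial F(x)$ has the form $[Sh](\cdot) = \langle s(\cdot), h(\cdot) \rangle$ a.s.\ for some $s \in \Lc_{r}(\Omega,\Fc,P;\Yf^{*})$ with $s(\cdot) \lessdot \partial f(x(\cdot),\cdot)$, and every such $s$ gives rise to a subgradient. Substituting this into $S^{*}\zeta$, for any $h\in\Xf$,
\[
\langle S^{*}\zeta, h\rangle = \Eb[\zeta\, Sh] = \Eb\bigl[\langle \zeta s,\, h\rangle\bigr].
\]
Because $h$ is $\Gc$-measurable, the tower property gives
\[
\Eb\bigl[\langle \zeta s, h\rangle\bigr] = \Eb\bigl[\bigl\langle \Eb[\zeta s\mid \Gc],\, h\bigr\rangle\bigr],
\]
so $S^{*}\zeta$ coincides, as an element of $\Xf^{*}$, with $\Eb[\zeta s\mid \Gc]$. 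Taking the union over admissible $\zeta$ and $s$ yields \eqref{dif-strass}, while conversely every such conditional expectation is of the form $S^{*}\zeta$ for the subgradient $S$ induced by $s$, so no spurious elements are introduced.

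The main technical point I would verify is that the conditional expectation is well defined as a Bochner-integrable $\Yf^{*}$-valued function, so that the displayed pairing genuinely represents an element of $\Xf^{*}$. By H\"older applied to $\zeta \in \Lc_{q}(\Omega,\Fc,P)$ (with $1/p+1/q = 1$) and $s \in \Lc_{r}(\Omega,\Fc,P;\Yf^{*})$, the product $\zeta s$ lies in $\Lc_{q'}(\Omega,\Fc,P;\Yf^{*})$ with $1/q' = 1/q + 1/r = 1 - 1/p'$, matching the target dual space; the contraction property of the Bochner conditional expectation onto the $\Gc$-measurable subspace then supplies $\Eb[\zeta s\mid \Gc] \in \Lc_{q'}(\Omega,\Gc,P;\Yf^{*})$. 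Separability of $\Yf$ provides the requisite measurability, and the boundary case $p' = \infty$ is handled by the same tower argument applied to the $\Lc_{\infty}(\Gc;\Yf)$--$\Lc_{1}(\Gc;\Yf^{*})$ pairing already exploited in Theorem \ref{t:F-subgradient-structure}.
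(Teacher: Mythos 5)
Your proposal is correct and follows essentially the same route as the paper's own proof: it combines Theorem \ref{t:composition_subdif} with the pointwise characterization of $\partial F(x)$ from Theorem \ref{t:F-subgradient-structure} and identifies $S^{*}\zeta$ with $\Eb[\zeta s\mid\Gc]$ via the tower property applied to $\Gc$-measurable $h$, which is exactly the chain of equalities the paper uses. Your additional H\"older-exponent check that $\zeta s\in\Lc_{q'}(\Omega,\Fc,P;\Yf^{*})$ with $1/p'+1/q'=1$ is a harmless (and welcome) extra verification that the paper leaves implicit.
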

\begin{proof}
We use the representation of the subdifferential established in Theorem~\ref{t:composition_subdif} and the characterizations of the
subgradients of $F(\Cdot)$ established in Theorem~\ref{t:pr-subdifa}.
For all $h\in \Xf$, $\zeta\in \partial \varrho(F(x))$,
and $S\in \partial F(x)$, we have
\[
\langle S^*\zeta,h \rangle = \langle \zeta,Sh \rangle = \Eb\big[ \zeta \langle s, h \rangle \big] = \Eb\big[\big\langle  \Eb[\zeta s|\Gc] , h \rangle \big],
\]
where $s(\omega) \in \partial f(x(\omega),\omega)$. This verifies \eqref{dif-strass}.
%\hfill{}
\end{proof}

Our results, in a special case, allow to obtain the famous subdifferential disintegration result, which is known as the \emph{Strassen Theorem}.
In the case of a positively homogeneous $f(\Cdot,\omega)$ and $\varrho(\Cdot) \equiv \Eb[\Cdot]$,
it was discovered in \cite{stra:65}.
Generalizations to convex functions and convex integrands are provided in
\cite{rockafellar1968integrals,rockafellar1971integrals,ioffe1972subdifferentials} and the books \cite{cast:77,lev:85}.
\begin{theorem}
\label{t:Strassen}
Suppose $\Xf$ is a  separable Banach space, $F:\Xf\to\Lc_1(\Omega,\Fc,P)$ is given by $F(x,\omega) = f(x,\omega)$, $\omega\in \Omega$,
$f(\Cdot,\omega)$ is convex and continuous at $x$ for almost all $\omega\in\Omega$,
and $ F(\Cdot)$ is continuous at $x$ as well.
Then the function $\Eb[ F(\Cdot)]$ is subdifferentiable at ${x}$ and
\[
\partial \Eb[ F(x)]=\Eb[\partial F(x)]=
\int_\Omega \partial f({x},\omega)\; P(\D\omega).
\]
\end{theorem}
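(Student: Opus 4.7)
The plan is to derive Theorem~\ref{t:Strassen} as a direct specialization of Theorem~\ref{t:pr-subdifa}. I identify the separable Banach space $\Xf$ of Theorem~\ref{t:Strassen} with the decision space $\Lc_{p'}(\Omega,\Gc,P;\Yf)$ of Theorem~\ref{t:pr-subdifa} in the degenerate case $\Yf:=\Xf$ and $\Gc:=\{\emptyset,\Omega\}$, under which $\Lc_{p'}(\Omega,\Gc,P;\Yf)$ reduces canonically to constant $\Yf$-valued functions, i.e.\ to $\Yf=\Xf$ itself. I take $p:=1$ and $p':=\infty$, so the conjugate exponent appearing in Theorem~\ref{t:F-subgradient-structure} is $r=1$ (from $1/p'+1/r = 1/p$), meaning measurable selectors $s$ of $\partial f(x,\cdot)$ are sought in $\Lc_1(\Omega,\Fc,P;\Xf^*)$. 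Finally, I choose $\varrho:=\Eb$, which is linear and continuous on $\Lc_1(\Omega,\Fc,P)$, hence convex and nondecreasing with respect to the a.s.\ order, and satisfies $\partial\Eb(Z)=\{\1_\Omega\}$ at every $Z$.

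With these identifications, the hypotheses of Theorem~\ref{t:pr-subdifa} are met by the assumptions of Theorem~\ref{t:Strassen}. Applying it with $\zeta\equiv \1_\Omega$, and using that the conditional expectation with respect to the trivial $\sigma$-algebra is the unconditional expectation, yields
\[
\partial\Eb[F(x)] \;=\; \bigcup_{s(\cdot)\,\lessdot\,\partial f(x,\cdot)} \Eb[s],
\]
the union being taken over all $s\in\Lc_1(\Omega,\Fc,P;\Xf^*)$ with $s(\omega)\in\partial f(x,\omega)$ almost surely, which is precisely the class of selectors produced by Theorem~\ref{t:F-subgradient-structure}. By definition, the right-hand side is the Aumann-type integral $\int_\Omega \partial f(x,\omega)\,P(\D\omega)$, which is the meaning of $\Eb[\partial F(x)]$, and both formulas in the statement are established simultaneously.

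The one place requiring care is the nonemptiness of the class of integrable selectors of $\partial f(x,\cdot)$; this, however, was already handled inside the proofs of Theorems~\ref{t:F-subgradient-structure-integrand} and~\ref{t:F-subgradient-structure}. Indeed, the continuity of $f(\cdot,\omega)$ at $x$ makes each $\partial f(x,\omega)$ weak$^*$-compact and nonempty, and the measurable selection theorem on the separable dual $\Xf^*$ delivers a measurable selector; the $\Lc_1$-integrability is exactly what the Radon--Nikodym argument used there produces. Beyond this, the argument is a verification of notation, so I do not foresee any genuine obstacle.
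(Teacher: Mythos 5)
Your proposal is correct and follows exactly the paper's own route: the paper derives Theorem~\tref{t:Strassen} precisely by setting $p=1$, $p'=\infty$, $\Gc=\{\emptyset,\Omega\}$, and $\varrho\equiv\Eb[\Cdot]$ in Theorem~\tref{t:pr-subdifa}, which is the specialization you carry out. Your additional remarks on the nonemptiness and integrability of the selectors are a reasonable elaboration of details the paper leaves implicit.
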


The result follows from setting $p=1$, $p'=\infty$, $\Gc = \{ \emptyset, \Omega\}$, and $\varrho(\Cdot) \equiv \Eb[\Cdot]$ in Theorem \ref{t:pr-subdifa}.

\section{Stochastic Dominance Operators}

We now turn to convex operators associated with stochastic dominance relations.

To the best of our knowledge the first general notion of a stochastic order was introduced in \cite{Lorenz}. It is related to the theory of weak majorization, as discussed in \cite{arnold2012majorization}. A comparison of random variables with respect to the first-order stochastic dominance is used in the context of statistical tests in the early works \cite{MannW,Blackwell1951ComparisonOE,lehmann1952testing}.
The stochastic dominance relations have received considerable attention after the works \cite{QS,1965PCFD,hanoch1969efficiency,Bawa1975OPTIMALRF,hadar1969rules,rothschild1978increasing}. The characterization of the first and second-order dominance via families of (differentiable) utility functions was stated in \cite{hadar1969rules} and rigorously proved in \cite{Bawa1975OPTIMALRF}, where also the third-order dominance is characterized. Stochastic dominance of fractional order $p\geq 1$ was introduced for non-negative bounded random variables in \cite{Fishburn1976ContinuaOS}, where also the monotonicity of the relation was proved.
A thorough survey and analysis of stochastic order relations is contained in the monographs \cite{MuellerStoyan} and \cite{ShaSha}. Below, we provide the information about fractional orders that is relevant to our research.

Consider a random variable $Z \in \Lc_1(\Omega,\Fc,P)$ with $p\in [1,\infty]$ and its distribution function $H_Z(\eta) =  P[ Z \le \eta ]$ for $\eta \in \Rb$. We define the integrated distribution function as follows:
\begin{equation}
\label{second}
 H^{(2)}_Z(\eta) = \int_{-\infty}^{\eta} H_Z(\alpha) \ \D\alpha
=
 \Eb  \big[ \max ( \eta - Z, 0 )\big] \quad \mbox{for} \  \eta \in \Rb.
  %= P (X \le \eta ) \Eb  \big[ \eta - X | X \le \eta \big].
\end{equation}

Recall that the left-continuous inverse of the cumulative distribution function
$H_Z (\Cdot)$ is defined as follows:
\[
 H^{(-1)}_Z(p) = \inf\ \{ \eta : H_Z(\eta) \ge p \}
\quad\text{for all }\; 0 < p < 1.
\]
Given $p \in [0,1]$, the number $q=q_Z(p)$ is called a \emph{$p$-quantile}
of the random variable $X$ if
\[
 P ( Z < q ) \le p \le  P (Z\le q ) .
\]
For $p\in (0,1)$, the set of $p$-quantiles is a closed interval and
$ H^{(-1)}_Z (p) $ represents its left end. We adopt the convention
that  $H^{(-1)}_Z(1) = +\infty$, if the $1$-quantile does not exist.

The integrated quantile function $H^{(-2)}_Z : \Rb \rightarrow {\Rb }$
is defined as
\begin{equation}
\label{F-2}
  H^{(-2)}_Z(p) =\int_0^p H^{(-1)}_Z(\alpha)\; \D\alpha  \quad \mbox{for } \;  0 < p \le 1,
\end{equation}
We also define $H^{(-2)}_Z(0) = 0$ and $H^{(-2)}_Z(p) = +\infty$ for all  real numbers $p\not\in[0,1]$.
The function $H^{(-2)}_Z(\Cdot)$  is
the absolute Lorenz function introduced in \cite{lorenz1905methods}.
It is well defined for any
random variable $Z\in \Lc_1(\Omega,\Fc,P).$ It is convex as an integral of a non-decreasing function.

The Fenchel duality relation between the integrated quantile function
$H^{(-2)}_Z(\Cdot)$ and the integrated distribution function
$H^{(2)}_Z(\Cdot)$ has been established in \cite{ruzsdom}.
\begin{theorem}
\label{t:Fduality}
For every integrable random variable $Z$, we have
\[
 H^{(-2)}_Z =  \big[H^{(2)}_Z\big]^*\quad\text{and}\quad
H^{(2)}_Z =  \big[H^{(-2)}_Z\big]^*.
\]
\end{theorem}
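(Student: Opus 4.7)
\medskip

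\noindent\textbf{Proof proposal.}
My plan is to establish only the first identity $H^{(-2)}_Z = [H^{(2)}_Z]^{*}$ directly; the companion identity $H^{(2)}_Z = [H^{(-2)}_Z]^{*}$ then follows from the Fenchel--Moreau biconjugate theorem, because $H^{(2)}_Z:\Rb\to[0,+\infty)$ is everywhere finite, convex, and nondecreasing, hence closed and proper as a function on $\Rb$.

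I begin by locating the effective domain of the conjugate. For $p<0$, letting $\eta\to-\infty$ drives $p\eta\to+\infty$ while $H^{(2)}_Z(\eta)\downarrow 0$ by dominated convergence. For $p>1$, the identity $H^{(2)}_Z(\eta) = \eta - \Eb[Z\wedge\eta]$ gives $p\eta-H^{(2)}_Z(\eta) = (p-1)\eta + \Eb[Z\wedge\eta]\to+\infty$ as $\eta\to+\infty$. Both limits match $H^{(-2)}_Z(p)=+\infty$ outside $[0,1]$.

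For the main case $p\in(0,1)$ the key observation is that $H^{(2)}_Z$ is a primitive of the nondecreasing function $H_Z$, so $\partial H^{(2)}_Z(\eta)=[H_Z(\eta^-),H_Z(\eta)]$. By Fenchel--Young, the supremum defining $[H^{(2)}_Z]^{*}(p)$ is attained at any $p$-quantile, in particular at $\eta_p \eqdef H^{(-1)}_Z(p)$, and the claim reduces to verifying
\begin{equation*}
p\,\eta_p - H^{(2)}_Z(\eta_p) \;=\; \int_0^p H^{(-1)}_Z(\alpha)\,\D\alpha.
\end{equation*}
To prove this identity I would use that $Z$ has the same law as $H^{(-1)}_Z(U)$ for $U$ uniform on $(0,1)$, which recasts $H^{(2)}_Z(\eta_p)=\int_0^1 (\eta_p - H^{(-1)}_Z(\alpha))^+\,\D\alpha$. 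The Galois-type relation $H^{(-1)}_Z(\alpha)\le\eta_p \Leftrightarrow \alpha\le H_Z(\eta_p)$ restricts this integral to $[0,H_Z(\eta_p)]$, and exploiting that $H^{(-1)}_Z\equiv \eta_p$ on the flat interval $(p,H_Z(\eta_p)]$ telescopes the expression into $\eta_p p - \int_0^p H^{(-1)}_Z(\alpha)\,\D\alpha$, as required. The boundary cases $p=0$ and $p=1$ then follow by sending $\eta\to-\infty$ and $\eta\to+\infty$, yielding $H^{(-2)}_Z(0)=0$ and $H^{(-2)}_Z(1)=\Eb[Z]$.

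The main technical obstacle is precisely the atom bookkeeping in the last step: the subdifferential $[H_Z(\eta_p^-),H_Z(\eta_p)]$ may be a nondegenerate interval containing $p$, so the left-continuous quantile is constant on the entire interval $(p,H_Z(\eta_p)]$ of length $H_Z(\eta_p)-p$, and that flat piece must be separated carefully from the contribution over $[0,p]$ in order to recover $H^{(-2)}_Z(p)$ rather than $H^{(-2)}_Z(H_Z(\eta_p))$. Once handled, no continuity hypothesis on $H_Z$ is needed and the theorem holds for every integrable $Z$.
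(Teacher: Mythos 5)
Your argument is correct. The paper itself gives no proof of this theorem --- it only cites Ogryczak and Ruszczy\'nski (2002) --- and your derivation is essentially the standard one from that source: identify $\operatorname{dom}[H^{(2)}_Z]^*=[0,1]$, use that $\partial H^{(2)}_Z(\eta)=[H_Z(\eta^-),H_Z(\eta)]$ so that Fenchel--Young forces attainment exactly at the $p$-quantiles, evaluate there via the representation $Z\sim H^{(-1)}_Z(U)$ with the flat-piece bookkeeping you describe (which you handle correctly: on $(p,H_Z(\eta_p)]$ the left-continuous quantile is constant equal to $\eta_p$, so the excess mass cancels), and recover the second identity from Fenchel--Moreau since $H^{(2)}_Z$ is finite, convex, hence closed and proper. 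No gaps.
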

\begin{definition}
\label{d:sdp}
A random variable $X$ dominates in the first order the random variable $Y$ if
\begin{equation}
\label{d:fsd}
H_X(\eta) \le H_Y(\eta)\;\; \forall \eta\in \Rb\Leftrightarrow  H^{(-1)}_X (p)\geq H^{(-1)}_Y (p)
\;\; \forall p\in (0,1).
\end{equation}
For $X,Y\in \Lc_1(\Omega,\Fc,P)$, $X$ dominates in the second order the
random variable $Y$ if
\begin{equation}
\label{d:ssd}
H^{(2)}_X(\eta) \le H^{(2)}_Y(\eta)\;\;\forall \eta\in \Rb \Leftrightarrow  H^{(-2)}_X (p)\geq H^{(-2)}_Y (p)
\;\; \forall p\in (0,1].
\end{equation}
\end{definition}
The equivalence of the two requirements in \eqref{d:fsd} is obvious while the equivalence of the two conditions in \eqref{d:ssd} follows from Theorem~\ref{t:Fduality}.
In our analysis, we shall focus on the second order dominance relation expressed as a relation between the Lorenz functions of the random variable in question.

Let $J=[\alpha,\beta]\subset (0,1].$ We shall relax the second-order stochastic dominance relation to the interval $J$, i.e.,  for two integrable random variables $X$ and $Y$, it is said that \emph{$X$ is larger than the random variable $Y$ w.r.to the Lorenz dominance on $J$}  if
\begin{equation}
\label{d:ssd_onJ}
H^{(-2)}_X(\eta) \geq H^{(-2)}_Y(\eta)\quad \text{for all }  p\in J.
\end{equation}
We define the set
\begin{equation}
\label{d:setB}
\mathbb{B}(Y,J) = \big\{X\in \Lc_1(\Omega,\Fc,P):
H^{(-2)}_X(p) \ge H^{(-2)}_Y(p)\ \, \forall\  p \in J\big\}.
\end{equation}
The following theorem establishes relevant properties of the set $B(Y,J)$. Statements (a) and (c) were shown in \cite{DDARbook}; we provide the whole proof for the sake of clarity and convenience of the reader.
\begin{lemma}
\label{l:F-2_posithom_convex}\phantom{X}
\begin{tightitemize}
\item[\textup{(a)}] For every $p\in (0,1)$, the mapping $Z\mapsto H^{(-2)}_Z(p)$  is continuous, concave
and positively homogeneous on $\Lc_1(\Omega,{\mathcal F},P)$.
\item[\textup{(b)}] The concave subdifferential of the mapping $Z\mapsto H^{(-2)}_Z(p)$ has the form
\begin{equation}
\label{e:Lorenz-subdifferential}
\begin{aligned}
  \partial_Z H^{(-2)}_Z(p) =  \Big\{p\zeta\in\Lc_\infty (\Omega,\Fc,P): ~ &\Eb[\zeta]=1,\;0\leq \zeta\leq \frac{1}{p},\\
  %&\; \Eb[\zeta X]= \avar^-_p(X)\big\}.
  &\; \Eb[\zeta Z]= \frac{1}{p} H^{(-2)}_Z(p)\Big\}.
  \end{aligned}
\end{equation}
\item[\textup{(c)}] For any interval $J\subseteq(0,1],$ the multifunction $Y\to\mathbb{B}(Y,J)$ is convex-valued and has a closed graph in $\Lc_1(\Omega,{\mathcal F},P).$
\end{tightitemize}
\end{lemma}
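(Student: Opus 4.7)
The plan is to derive all three parts from a single LP-duality representation:
\[
H^{(-2)}_Z(p) \;=\; \min\bigl\{\Eb[\xi Z]: \xi\in \Lc_\infty(\Omega,\Fc,P),\ 0\le \xi\le 1,\ \Eb[\xi]=p\bigr\}.
\]
To establish it, I would pair the constraint $\Eb[\xi]=p$ with a scalar multiplier $\eta\in \Rb$; the resulting Lagrangian $\Eb[\xi(Z-\eta)]+\eta p$ is minimized pointwise in $\omega$ over $\xi(\omega)\in[0,1]$ by $\1_{\{Z<\eta\}}$, producing the dual objective $p\eta - \Eb[(\eta-Z)^+]$, whose supremum over $\eta\in\Rb$ equals $[H^{(2)}_Z]^*(p) = H^{(-2)}_Z(p)$ by Theorem~\ref{t:Fduality}. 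Strong duality and attainment of the primal optimum follow because the feasible set $\Pi_p = \{\xi:\Eb[\xi]=p,\ 0\le\xi\le 1\}$ is weak-$*$ compact in $\Lc_\infty = \Lc_1^*$ (Banach--Alaoglu) and the objective is weak-$*$ continuous. An explicit primal optimizer is $\xi^\star = \1_{\{Z<q\}} + \beta\,\1_{\{Z=q\}}$ for $q = H^{(-1)}_Z(p)$ and $\beta\in[0,1]$ chosen so that $\Eb[\xi^\star]=p$.

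From this dual representation, part (a) is immediate: $Z\mapsto H^{(-2)}_Z(p)$ is concave as the pointwise infimum of continuous affine functionals, positively homogeneous because $\lambda>0$ factors out of the objective, and Lipschitz of constant $1$ on $\Lc_1$, since $|\Eb[\xi(Z-Z')]|\le \|Z-Z'\|_1$ uniformly in $\xi\in \Pi_p$. For part (b), I would apply a Danskin/Moreau--Rockafellar-type result to identify the concave superdifferential of the infimum at $Z$ with the set of active minimizers $\{\xi\in\Pi_p : \Eb[\xi Z] = H^{(-2)}_Z(p)\}$; the easy inclusion is immediate from the inequality $H^{(-2)}_{Z'}(p)\le \Eb[\xi Z']$, while the reverse uses positive homogeneity to force $\Eb[sZ] = H^{(-2)}_Z(p)$ for any supergradient $s$, followed by testing against constants and appropriately chosen indicators $\pm\1_A$ to verify $\Eb[s]=p$ and $0\le s\le 1$ a.s. Reparameterizing $\xi = p\zeta$ then turns the constraints $\Eb[\xi]=p$, $0\le\xi\le 1$ into $\Eb[\zeta]=1$, $0\le\zeta\le 1/p$, and the active-minimizer condition into $\Eb[\zeta Z]=\tfrac{1}{p}H^{(-2)}_Z(p)$, matching \eqref{e:Lorenz-subdifferential} exactly.

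Part (c) drops out by applying (a) pointwise in $p\in J$. For convex-valuedness, concavity of $Z\mapsto H^{(-2)}_Z(p)$ gives $H^{(-2)}_{\lambda X_1+(1-\lambda)X_2}(p)\ge \lambda H^{(-2)}_{X_1}(p) + (1-\lambda)H^{(-2)}_{X_2}(p) \ge H^{(-2)}_Y(p)$ whenever $X_1, X_2\in \mathbb{B}(Y,J)$ and $\lambda\in[0,1]$. For the closed graph, if $X_n\to X$ and $Y_n\to Y$ in $\Lc_1$ with $X_n\in\mathbb{B}(Y_n,J)$, the Lipschitz continuity from (a) lets the inequality $H^{(-2)}_{X_n}(p)\ge H^{(-2)}_{Y_n}(p)$ pass to the limit for every $p\in J$. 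I expect the main technical hurdle to be the rigorous justification of strong duality and the Danskin-type superdifferential formula in the weak-$*$ topology; the weak-$*$ compactness of $\Pi_p$ and the linearity of the objective in $\xi$ make both routine once the dual representation is in hand.
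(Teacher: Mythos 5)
Your proof is correct, but it takes a genuinely different route from the paper. The paper's proof is essentially by identification and citation: it observes that $Z\mapsto H^{(-2)}_Z(p)$ coincides with $Z\mapsto -p\avar^-_p(Z)$ and then invokes the known coherence, continuity, and subdifferential formula of the Average Value-at-Risk from the authors' book, so that (a) and (b) are read off and (c) follows as in your argument. You instead derive everything from scratch via the representation $H^{(-2)}_Z(p)=\min\{\Eb[\xi Z]:0\le\xi\le1,\ \Eb[\xi]=p\}$, obtained from Theorem~\ref{t:Fduality} by a Lagrangian computation; this is mathematically the same dual representation that underlies the AVaR subdifferential formula (set $\xi=p\zeta$), so the two proofs rest on identical content, but yours is self-contained while the paper's is two lines plus a reference. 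Your approach buys a complete verification of (b), including the reverse inclusion via testing the supergradient against constants and indicators, which the paper leaves entirely to the cited source; the paper's approach buys brevity. Two small points to tighten: weak-$*$ compactness of $\Pi_p$ plus weak-$*$ continuity of the objective gives attainment of the primal minimum but not, by itself, absence of a duality gap --- however, your explicit optimizer $\xi^\star=\1_{\{Z<q\}}+\beta\1_{\{Z=q\}}$ achieves the dual value at $\eta=q$, which closes the gap directly, so you should present that computation as the actual proof of the representation rather than appealing to abstract strong duality. Likewise, the Danskin-type identity returns the weak-$*$ closed convex hull of the active minimizers; here that set is already convex and weak-$*$ closed (a face of $\Pi_p$), so the identity holds as stated, but this deserves a sentence.
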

\begin{proof}
The mapping $Z \mapsto  H^{(-2)}_Z(p)$ is identical to $Z\mapsto -p\avar^-_p(Z)$, where  $\avar^-_p(Z)$ is the Average Value-at-Risk representing profits (see \cite[Remark 2.20]{DDARbook}). Hence, the concavity and the positive homogeneity in statement (a) follow from the coherence of $\avar^-_p(\Cdot)$. Using the formula for the subdifferential of $\avar^-_p(\Cdot)$, we obtain statement (b).

Further, the continuity of $Z\mapsto \avar^-_p(Z)$ implies the continuity of the mapping $Z \mapsto  H^{(-2)}_Z(p)$ in $\Lc_1(\Omega,{\mathcal F},P)$, which in turn implies that the graph of the multifunction $Y\to\mathbb{B}(Y,J)$ is closed.
If $X,Z\in \mathbb{B}(Y,J)$, then the concavity of $Z \mapsto  H^{(-2)}_Z(p)$ implies that $tX+(1-t)Z\in \mathbb{B}(Y,J)$ entailing the convexity of the set
$\mathbb{B}(Y,J).$
\end{proof}

\section{Optimization with Stochastic Dominance Constraints}

Optimization problems with stochastic dominance constraints were introduced in \cite{ddarsiam} for integer orders and further analyzed in \cite{ddarmp};  a generalization to fractional orders is given in \cite{DDARbook}. Optimality conditions in Lagrangian form as well as in subdifferential form  show the role of utility functions as Lagrange multipliers.
   Our objective here is to derive new optimality conditions in subdifferential form for the inverse formulation of stochastic dominance. Optimality conditions in Lagrangian form
   established in \cite{DDARbook,dentcheva2008duality,Dentcheva2006InverseSD} show relations to distortion functionals and coherent measures of risk.

We analyze the following optimization problem:
\begin{align}
\min \ & \varphi(x)\label{dc-p1a}\\
\text{subject to}\ &
H^{(-2)}_{G(x)}(p) \ge H^{(-2)}_Y(p)\quad \forall p\in J=[\alpha,\beta]\subset(0,1], \label{dc-p2a}\\
& x\in \Yc. \label{dc-p3a}
\end{align}
The formulation fits the setting of \eqref{general-risk-opt} with
$\rho_p (G(x)) = H^{(-2)}_Y(p) - H^{(-2)}_{G(x)}(p).$

The function $\varphi(\Cdot)$ may be the composition \eqref{phi-def}; below we just use its general form for simplicity. In this section, we focus on the constraint operators and adopt the following general assumptions.
\begin{assumption}{~~}
\label{a:A-convex}
\begin{tightitemize}
  \item[\textup{(i)}] The function $\varphi(\Cdot)$ is convex and continuous.
  \item[\textup{(ii)}] The operator $G:\Xf\to\Lc_p(\Omega,\Fc,P)$ is norm-to-norm continuous and has the structure
$[G(x)](\omega) = g(x,\omega),
$
where $g:\Xf\times\Omega\to\Rb$ is a Carath\'eodory function and $g(\Cdot,\omega)$ is concave for all $\omega\in\Omega$.
  \item[\textup{(iii)}] The set $\Yc$ is a closed and convex subset of a Banach space $\Xf.$
\end{tightitemize}
\end{assumption}
The following requirement is our constraint qualification condition.
\begin{definition}
\label{uniformSDC}
Problem \eqref{dc-p1a}--\eqref{dc-p3a} satisfies the \emph{uniform inverse dominance condition} if
a point $\tilde{x} \in \Yc$ exists such that
\[
\inf_{p \in J}\big[ H^{(-2)}_{G(x)}(p) - H^{(-2)}_Y(p) \big] > 0.
\]
\end{definition}

The following result is known;  see \cite[Theorem~5.51]{DDARbook}.
\begin{theorem}
\label{t:KTspectral}
Suppose Assumption~\tref{a:A-convex} and the uniform inverse dominance condition are satisfied.
If $\hat{x}$ is an optimal solution of \eqref{dc-p1a}--\eqref{dc-p3a} % and the set $I(\hat{X})\ne\emptyset$.
then a spectral risk measure $\hat{\varrho}(\Cdot)$ and a constant $\kappa \ge 0$ exist such that $\hat{x}$
is also an optimal solution of the problem
\begin{gather}
\min_{x\in \Yc} \big\{  f(x) + \kappa \hat{\varrho}\big(G(x)\big) \big\}, \label{KTsrisk}\\
\kappa \hat{\varrho}\big(G(\hat{x})\big) = \kappa \hat{\varrho}(Y). \label{complrisk}
\end{gather}
Conversely, if for some  spectral  risk measure $\hat{\varrho}(\Cdot)$ and some $\kappa\ge 0$
an optimal solution $\hat{x}$ of \eqref{KTsrisk} satisfies \eqref{dc-p2a} with $[\alpha,\beta]=[0,1]$
and \eqref{complrisk}, then
$\hat{x}$ is an optimal solution of \eqref{dc-p1a}--\eqref{dc-p3a} with $[\alpha,\beta]=[0,1]$.
\end{theorem}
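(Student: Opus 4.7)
The plan is to treat problem \eqref{dc-p1a}--\eqref{dc-p3a} as a convex semi-infinite program indexed by $p\in J$, extract a nonnegative Lagrange multiplier $\mu$ as a regular Borel measure on $J$, and, via Fubini applied to the Lorenz representation $H^{(-2)}_Z(p)=\int_0^p H^{(-1)}_Z(\alpha)\,\D\alpha$, convert $\mu$ into a spectral risk measure $\hat{\varrho}$ evaluated on $G(x)$ and $Y$.

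First, I would check convexity of the constraint system. By Lemma~\ref{l:F-2_posithom_convex}(a) each mapping $Z\mapsto H^{(-2)}_Z(p)$ is concave, continuous, and nondecreasing with respect to the a.s.\ order; composed with the concave Carath\'eodory operator $G$ of Assumption~\ref{a:A-convex}(ii), the map $x\mapsto H^{(-2)}_{G(x)}(p)$ is concave and continuous on $\Xf$. Hence the constraints $h(x,p):=H^{(-2)}_Y(p)-H^{(-2)}_{G(x)}(p)\le 0$, $p\in J$, form a convex semi-infinite system, and the uniform inverse dominance condition supplies a Slater point $\tilde{x}$ at which $\sup_{p\in J}h(\tilde{x},p)<0$.

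The Slater condition enables the standard convex duality theorem for constraints valued in $\Cc(J)$, producing a nonnegative regular Borel measure $\mu$ on $J$ such that $\hat{x}$ minimizes
\[
L(x,\mu)=\varphi(x)+\int_J\!\bigl[H^{(-2)}_Y(p)-H^{(-2)}_{G(x)}(p)\bigr]\,\D\mu(p)
\]
over $x\in\Yc$, with complementary slackness $\int_J\bigl[H^{(-2)}_Y(p)-H^{(-2)}_{G(\hat{x})}(p)\bigr]\,\D\mu(p)=0$. Applying Fubini, for any integrable $Z$ we get
\[
\int_J H^{(-2)}_Z(p)\,\D\mu(p)=\int_0^1 H^{(-1)}_Z(\alpha)\,\phi(\alpha)\,\D\alpha, \qquad \phi(\alpha):=\mu\bigl(J\cap[\alpha,1]\bigr),
\]
where $\phi$ is nonnegative, nonincreasing on $(0,1]$, and supported in $[0,\beta]$ — exactly the shape of a spectral weight. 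Setting $\kappa:=\int_0^1\phi(\alpha)\,\D\alpha$ and, for $\kappa>0$, $\hat{\varrho}(Z):=-\kappa^{-1}\int_0^1 H^{(-1)}_Z(\alpha)\,\phi(\alpha)\,\D\alpha$, the Lagrangian reduces to $\varphi(x)+\kappa\hat{\varrho}(G(x))-\kappa\hat{\varrho}(Y)$, so minimizing $L(\Cdot,\mu)$ over $\Yc$ is exactly problem \eqref{KTsrisk}, and the complementarity identity becomes \eqref{complrisk}; the case $\kappa=0$ is trivial.

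For the converse with $J=[0,1]$, integration by parts against the nonincreasing weight $\phi$ expresses $-\hat{\varrho}(Z)$ as a nonnegative integral of the values $H^{(-2)}_Z(p)$, so $\hat{\varrho}$ is nonincreasing with respect to second-order stochastic dominance. Any $x$ feasible for \eqref{dc-p2a} therefore satisfies $\hat{\varrho}(G(x))\le\hat{\varrho}(Y)=\hat{\varrho}(G(\hat{x}))$, and combined with the optimality of $\hat{x}$ in \eqref{KTsrisk} this yields $\varphi(\hat{x})\le\varphi(x)$. The main obstacle will be the rigorous extraction of the multiplier $\mu$ as a regular Borel measure on $J$ and the technical justification of the Fubini exchange required to recognize $\phi$ as a spectral weight; both steps rely crucially on the Slater-type regularity afforded by the uniform inverse dominance together with uniform continuity of $p\mapsto H^{(-2)}_{G(\Cdot)}(p)$ on the compact interval $J$.
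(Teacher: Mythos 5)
The paper does not actually prove Theorem~\ref{t:KTspectral}; it is stated as a known result with a citation to \cite[Theorem~5.51]{DDARbook}, so there is no in-paper argument to compare against. Your proposal reconstructs what is essentially the standard (and, as far as I can tell, the cited reference's) derivation: view \eqref{dc-p2a} as a convex semi-infinite system valued in $\Cc(J)$, use the uniform inverse dominance condition as a Slater point to extract a nonnegative regular Borel multiplier $\mu\in\Cc(J)^*$, and then use Fubini on $H^{(-2)}_Z(p)=\int_0^p H^{(-1)}_Z(\alpha)\,\D\alpha$ to recognize $\phi(\alpha)=\mu(J\cap[\alpha,1])$ as a nonincreasing spectral weight, with $\kappa=\int_0^1\phi$; the converse via SSD-consistency of spectral measures is also the standard argument. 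The logic is sound, and the technical points you flag are the right ones and are all verifiable here: $x\mapsto H^{(-2)}_{G(x)}(\Cdot)$ maps continuously into $\Cc(J)$ because $|H^{(-2)}_Z(p)-H^{(-2)}_{Z'}(p)|\le \|Z-Z'\|_{\Lc_1}$ uniformly in $p$ (so norm-continuity of $G$ from Assumption~\ref{a:A-convex}(ii) suffices), monotonicity of $Z\mapsto H^{(-2)}_Z(p)$ with respect to the a.s.\ order makes the composition with the concave integrand $g(\Cdot,\omega)$ concave, and the Fubini exchange is justified since $\mu$ is finite and $Z\in\Lc_1$. One small notational caution: you reuse $\alpha$ both as the left endpoint of $J$ and as the integration variable in the quantile representation; keep these distinct when writing the argument out in full.
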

Recall that the normal cone to the convex set $\Yc\subset\Xf$ at a point $x\in\Yc$ is given by the formula:
\[
\Nc_{\Yc} (x) = \{d\in\Xf^*:\; \langle d, y-x\rangle \leq 0\quad\forall y\in\Yc\}
\]
with $\Xf^*$ being the topological dual to $\Xf.$

For a fixed $\omega,$ the concave subdifferential $\partial g(\hat{x},\omega)$ at $\hat{x}$
is as follows
\[
\partial g(\hat{x},\omega) = \{s\in \Xf^*:\,  g(x,\omega) \leq g(\hat{x},\omega) + \langle s(\omega),x-\hat{x}\rangle \  \forall x\in\Xf\}.
\]
In the theorem below, we use Definition \ref{d:F-subgradient} of a subgradient of a lattice-valued convex operator adapted to the concave case discussed here.
\begin{theorem}
\label{optimality_conditions_subdiff}
Suppose Assumption~\tref{a:A-convex} and the uniform dominance condition of Definition~\tref{uniformSDC} are satisfied for problem \eqref{dc-p1a}--\eqref{dc-p3a}.
If $\hat{x}$ is an optimal solution to that problem, then
a subgradient $S\in\partial [G(\hat{x})]$, a probability measure $\hat{\mu}$ supported on $[\alpha,\beta]$, and a constant $\kappa \ge 0$ exist such that the following inclusion
\begin{equation}
0\in \partial \varphi(\hat{x}) +\kappa S^* \int_\alpha^\beta \partial \avar^-_p (G(\hat{x})) \;\hat{\mu}(\D p)  +  {\Nc}_{\mathcal{X}}(\hat{x}), \label{sd-inv-subgradient}
\end{equation}
along with the complementarity condition \eqref{complrisk} are satisfied.
Conversely, if for some  probability measure $\hat{\mu}(\Cdot)$ on $[0,1]$ and some $\kappa\ge 0$ the inclusion \eqref{sd-inv-subgradient} is satisfied with a subgradient $S\in\partial [G(\hat{x})]$, as well as \eqref{complrisk} and \eqref{dc-p2a} hold with $[\alpha,\beta]=[0,1]$, then
$\hat{x}$ is an optimal solution of \eqref{dc-p1a}--\eqref{dc-p3a} with $[\alpha,\beta]=[0,1]$.
\end{theorem}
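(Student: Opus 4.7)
The plan is to reduce problem \eqref{dc-p1a}--\eqref{dc-p3a} to an unconstrained penalized problem via Theorem \ref{t:KTspectral}, and then apply the composition subdifferential calculus of Section \ref{s:convex-operators}. First, since $\hat{x}$ is optimal and the uniform inverse dominance condition holds, Theorem \ref{t:KTspectral} yields a spectral risk measure $\hat{\varrho}$ and $\kappa\ge 0$ such that $\hat{x}$ minimizes $x\mapsto\varphi(x)+\kappa\hat{\varrho}(G(x))$ over $\Yc$ and the complementarity condition \eqref{complrisk} holds. The spectral risk measure admits the Kusuoka-type representation
\[
\hat{\varrho}(Z) \;=\; \int_\alpha^\beta \avar_p^{-}(Z)\;\hat{\mu}(\D p),
\]
with a probability measure $\hat{\mu}$ supported on $[\alpha,\beta]$; the localization of the support on the subinterval $J$ follows from tracing through the proof of Theorem \ref{t:KTspectral}, where the multiplier $\hat{\mu}$ originates from the stochastic dominance constraint restricted to $J$.

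Second, I apply Fermat's rule to the penalized problem at $\hat{x}$. The objective $\varphi$ is continuous and convex, $G$ is norm-to-norm continuous with concave integrand, and $\hat{\varrho}$ is convex and monotone; thus the sum rule and the composition rule apply, giving
\[
0 \in \partial \varphi(\hat{x}) + \kappa\,\partial(\hat{\varrho}\circ G)(\hat{x}) + \Nc_{\Yc}(\hat{x}).
\]
The composition rule of Theorem \ref{t:composition_subdif}, applied with the sign convention for concave $G$ and monotone $\hat{\varrho}$, yields
\[
\partial(\hat{\varrho}\circ G)(\hat{x}) \;=\; \bigcup_{\nobarfrac{\zeta\in\partial\hat{\varrho}(G(\hat{x}))}{S\in\partial [G(\hat{x})]}} S^*\zeta.
\]
I then use the linearity of the integral defining $\hat{\varrho}$ together with a measurable selection argument to identify
\[
\partial\hat{\varrho}(G(\hat{x})) \;=\; \int_\alpha^\beta \partial\avar_p^{-}(G(\hat{x}))\;\hat{\mu}(\D p),
\]
understood as an Aumann-type integral whose measurability is guaranteed by the explicit description of $\partial\avar_p^{-}$ through Lemma \ref{l:F-2_posithom_convex}(b). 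Substituting this expression into the Fermat inclusion above yields \eqref{sd-inv-subgradient}.

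For the converse, I assume that some $\hat{\mu}$ on $[0,1]$ and $\kappa\ge 0$ satisfy \eqref{sd-inv-subgradient}, \eqref{complrisk}, and \eqref{dc-p2a} with $[\alpha,\beta]=[0,1]$. Defining $\hat{\varrho}$ from $\hat{\mu}$ via the Choquet-type integral above produces a spectral risk measure, and the same subdifferential calculus shows that \eqref{sd-inv-subgradient} is exactly the Fermat optimality condition for \eqref{KTsrisk}. Hence $\hat{x}$ is optimal for \eqref{KTsrisk}, and the converse direction of Theorem \ref{t:KTspectral} — together with the assumed feasibility \eqref{dc-p2a} and complementarity \eqref{complrisk} — implies that $\hat{x}$ is optimal for \eqref{dc-p1a}--\eqref{dc-p3a}.

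The main obstacle is tracking the sign and duality conventions: $\avar_p^{-}$ represents profits and is concave in $Z$; the constraint operator $G$ is concave; and the representation of $\hat{\varrho}$ as a mixture of $\avar_p^{-}$ requires the concave variant of Theorem \ref{t:composition_subdif} rather than the stated convex one. Ensuring that the ``concavifying'' transformation preserves the structure of the subdifferential formula \eqref{composition_subdif}, and that the Aumann integral $\int_\alpha^\beta \partial\avar_p^{-}(G(\hat{x}))\,\hat{\mu}(\D p)$ coincides with $\partial\hat{\varrho}(G(\hat{x}))$, is where most of the technical care will be concentrated.
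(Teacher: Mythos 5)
Your proposal is correct and follows essentially the same route as the paper: both reduce the problem via Theorem~\ref{t:KTspectral} to the penalized formulation \eqref{KTsrisk}, represent the spectral risk measure as a mixture $\int_\alpha^\beta \avar^-_p(\Cdot)\,\hat{\mu}(\D p)$, and then apply the composition subdifferential calculus (the paper invokes Theorem~\ref{t:F-subgradient-structure} for the structure of $\partial[G(\hat{x})]$ where you cite Theorem~\ref{t:composition_subdif} plus a measurable-selection argument, but the substance is the same), with the converse handled by the converse part of Theorem~\ref{t:KTspectral}. The sign-convention and Aumann-integral identification issues you flag are exactly the points the paper's proof also relies on, and your separate treatment of $\kappa=0$ versus $\kappa>0$ mirrors the paper's.
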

\begin{proof}
In view of Theorem~\ref{t:KTspectral}, we only need show the equivalence of conditions \eqref{sd-inv-subgradient}
and \eqref{KTsrisk}. Notice that if $\kappa = 0$, then condition \eqref{KTsrisk} means that $\hat{x}$ minimizes $f(\cdot)$ over the set $\Yc$ and constraint \eqref{dc-p2a} is not active. In that case, \eqref{KTsrisk} is equivalent to \eqref{sd-inv-subgradient} trivially.

Assuming that $\kappa>0$, the spectral risk measure in condition \eqref{KTsrisk} has the form
\[
\hat{\varrho}(Z) = \int_\alpha^\beta \avar^-_p (Z)\;\hat{\mu}(\D p)
\]
with some probability measure $\hat{\mu}$ supported on the interval $[\alpha,\beta].$

Theorem \ref{t:F-subgradient-structure} implies that the operator $G:\Xf\to\Lc_p(\Omega,\Fc,P)$ is subdifferentiable at $\hat{x}$ and its subdifferential is given by
\begin{multline*}
\partial [G(\hat{x})] = \big\{ S:\Xf\to\Lc_p(\Omega,\Fc,P) : [Sd] (\omega) = \langle s(\omega),d\rangle \\
\text{ for some } s\in\Lc_p(\Omega,\Fc,P;\Xf^*);\; s(\omega)\in\partial g(\hat{x},\omega)\big\}.
\end{multline*}
Under the assumptions of the theorem, the function
\[
\Lf(x) = f(x) + \kappa \hat{\varrho}\big(G(x)\big) = f(x) + \kappa \int_\alpha^\beta \avar^-_p (G(x))\;\hat{\mu}(\D p)
\]
is convex.
Hence $\hat{x}\in\Yc$ minimizes $\Lf (x)$ if and only if for some $S\in\partial G(\hat{x})$
\[
0\in\partial f(\hat{x}) + \kappa \partial_x  \hat{\varrho}\big(G(x)\big) = \partial f(\hat{x}) + \kappa \int_\alpha^\beta S^* \partial \avar^-_p (G(\hat{x})) \;\hat{\mu}(\D p).
\]
The right-hand side is equivalent to the one in \eqref{sd-inv-subgradient}.
\end{proof}

We can obtain another equivalent form of the integral term in condition \eqref{sd-inv-subgradient} for the non-trivial case of $\kappa>0$:
\begin{multline*}
\kappa\int_\alpha^\beta S^* \partial \avar^-_p (G(\hat{x})) \;\hat{\mu}(\D p) \\
 = \kappa\Big\{ S^*\int_\alpha^\beta \zeta(\hat{x},p,\omega)\;\hat{\mu}(\D p):~  \zeta(\hat{x},p,\omega)\in \partial \avar^-_p (G(\hat{x}))\Big\}\\
= \Big\{ S^*\int_\alpha^\beta \xi(\hat{x},p,\omega)\;\hat{\nu}(\D p):~  \xi(\hat{x},p,\omega)\in \partial H_{G(\hat{x})}^{(-2)} (p)\Big\}.
\end{multline*}
In the last formula $\hat{\nu}(\D p) = \frac{\kappa}{p} \hat{\mu}(\D p)$.

\section*{Conclusions}

We summarize our main contributions as follows. We provide new results on subdifferentiability of convex operators and their compositions which go beyond the theory of normal integrands.
The key technical advance is  the description of the subdifferential of a local convex operator between $\Lc_p$ spaces in Theorem \ref{t:F-subgradient-structure-integrand}. We offer a new proof technique using the Radon-Nikodym theorem for vector measures.
The key statement in Theorem \ref{t:F-subgradient-structure-integrand} allows for dealing with the new model with partial information in section \ref{sub-convex-composition}.
Additionally, we offer new optimality conditions in subdifferential form for optimization problems with inverse stochastic dominance constraints.

\bibliographystyle{abbrv}
\newcommand{\noopsort}[1]{} \newcommand{\printfirst}[2]{#1}
  \newcommand{\singleletter}[1]{#1} \newcommand{\switchargs}[2]{#2#1}

\end{document}